\documentclass[11pt]{amsart}
\usepackage[vmargin=2.5cm,hmargin=2.5cm]{geometry}
\usepackage{enumerate}
\usepackage{color}
\usepackage{url}

\newtheorem{theorem}{Theorem}
\newtheorem{lemma}[theorem]{Lemma}
\newtheorem{corollary}[theorem]{Corollary}
\newtheorem{proposition}[theorem]{Proposition}

\newtheorem{defn}[theorem]{Definition}

\theoremstyle{remark}
\newtheorem{example}[theorem]{Example}
\newtheorem{remark}[theorem]{Remark}

\newcommand{\peb}[1]{\left\lfloor {#1}\right\rfloor}

\title{Delta sets for symmetric numerical semigroups with embedding dimension three} 

\author{P. A. Garc\'{\i}a-S\'{a}nchez}
\address{Departamento de \'Algebra and IEMath-GR,
   Universidad de Granada, 18071 Granada, Espa\~na}
\email{pedro@ugr.es}


\author{D. Llena}
\address{Departamento de Matem\'aticas, Universidad de
   Almer\'{\i}a, 04120 Almer\'{\i}a, Espa\~na}
\email{dllena@ual.es}

\thanks{The first and second author are supported by the project FQM-343, MTM2014-55367-P, and FEDER funds}

\author{A. Moscariello}
\address{Dipartimento di Matematica, Università di Pisa, Largo Bruno Pontecorvo 5, 56127 Pisa, Italy.}

\email{moscariello@mail.dm.unipi.it}

\keywords{Numerical Semigroups, non-unique factorization, Delta set, Euclid's Algorithm}
\subjclass[2010]{05A17,20M13,20M14}

\begin{document}
\maketitle
\begin{abstract}
	This work extends the results known for the Delta sets of non-symmetric numerical semigroups with embedding dimension three to the symmetric case. Thus, we have a fast algorithm to compute the Delta set of any embedding dimension three numerical semigroup. Also, as a consequence of these resutls, the sets that can be realized as Delta sets of numerical semigroups of embedding dimension three are fully characterized.
\end{abstract}
\section{Introduction}
Delta sets (or sets of distances) were first introduced in \cite{G} as a tool to study factorizations in non-unique factorization domains. Since then several authors have studied their properties. In particular, for numerical semigroups, in \cite{BCKR}, the firsts results for some special cases of embedding dimension three numerical semigroups are presented. Delta sets for numerical semigroups are eventually periodic as explained in \cite{CHK}, and thus, if a bound for this periodicity is known, the whole Delta set of a numerical semigroup can be computed. The bounds given in \cite{CHK} were improved in \cite{GMV}. In \cite{MOP} a dynamical procedure to compute Delta sets for numerical semigroups is presented, that makes use of the bound given in \cite{GMV}; this procedure has been implemented in \texttt{numericalsgps} \cite{numericalsgps}. Recently, a new procedure to compute Delta sets of any affine semigroup using G\"obner basis has been presented in \cite{G-SOW}. Needless to say, all these algorithms and bounds were the consequence of a better (theoretical) understanding of Delta sets. 

In \cite{CGLMS} it is shown that the maximum of the Delta set of a numerical semigroup is attained in a Betti element of the semigroup (indeed, it is shown that this holds for a wider class of atomic monoids). This does not provide a way to compute the whole set. The minimum was known to be the greatest common divisor of the Delta set since \cite{G}; however, the elements in the interval determined by this minimum and maximum element of the Delta set are not known in general. Some realization results were given in \cite{k-deltas}, while in \cite{G-SLM} the sets that can be realized as the Delta set of a non-symmetric numerical semigroup with embedding dimension three are completely characterized. In that paper, the authors present a procedure that strongly reduces the time needed to compute the Delta sets of non-symmetric embedding dimension three numerical semigroups. In this manuscript, we extend this algorithm to the symmetric case. 

Although the main results yield the same algorithm for both symmetric and non-symmetric numerical semigroups with embedding dimension three, there are significant differences in the intermediate results and their proofs. These differences stem from the different structure of the Betti elements of these semigroups (see Proposition \ref{struct}). 

The factorizations of the Betti elements of non-symmetric embedding dimension three numerical semigroups have been thoroughly studied, and thus provided an \emph{easier} starting point. However, in that case, some technical and tedious detours were needed to obtain our result.

Due to the fact that, in the symmetric case, we do not have a unique minimal presentation (in contraposition to what happens in the non-symmetric case), the first step is to choose the right factorization of the Betti elements to start our algorithm. However, in this case we can work directly on the Euclid's set (see Definition \ref{Euclidset}), whereas this cannot be done in the non-symmetric case without building a certain theoretical framework. In this way, we can better see the idea behind the main result, and the technical results are no longer necessary. Moreover, the proofs and the reading of the paper are more comfortable, and the number of pages is reduced.  Unfortunately,
from our reasoning, it emerged that there are no relations between these two cases; we think that it is not possible to find a common way to deduce both settings together.

This paper, even though is complementary to \cite{G-SLM}, is self-contained, and therefore can be read separately from \cite{G-SLM}. After some preliminary results, we explain how we can choose the elements $\delta_1$ and $\delta_2$ needed to start the algorithm. As we mentioned above, this was not needed in the non-symmetric case, since in that setting the Delta sets of the Betti elements of the numerical semigroup are singletons. Then, we prove our main result, which yields an algorithm that works in the same way as in the non-symmetric case; in particular, notice that Example \ref{mainexample} provided in this paper gives the same Delta set as in \cite[Example 38]{G-SLM}, because it is obtained from the same $\delta_1$ and $\delta_2$.

\section{Preliminaries}
Let $\mathbb N$ be the set of non negative integers.
Take $n_1$, $n_2$, $n_3\in \mathbb N$ with $\gcd(n_1,n_2,n_3)=1$, and define $S$ as the numerical semigroup minimally generated by $\{n_1,n_2,n_3\}$, that is,
\[
S=\{a_1n_1+a_2n_2+a_3n_3 \mid a_1,a_2,a_3\in \mathbb N\}.
\] 
The condition $\gcd(n_1,n_2,n_3)=1$ is equivalent to say that the set of gaps, $\mathrm G(S)=\mathbb N\setminus S$, has finitely many elements. The maximum of $\mathbb Z\setminus S$ is called the Frobenius number, and we denote it as $\mathrm F(S)$.

\begin{defn}
	A numerical semigroup $S$ is symmetric if $x\in \mathbb Z\setminus S$ implies $\mathrm F(S)-x\in S$.
\end{defn}
The reader interested in numerical semigroups may have a look at \cite{RG} and for some applications to \cite{AG-S}. 

The \emph{set of factorizations} of $s\in S$ is \[\mathsf Z(s)=\{(z_1,z_2,z_3)\in \mathbb N^3 \mid z_1n_1+z_2n_2+z_3n_3=s\}.\] We denote the \emph{length} of a factorization $\mathbf z=(z_1,z_2,z_3)\in \mathsf Z(s)$ as \[\ell(\mathbf z)=z_1+z_2+z_3.\] The \emph{set of lengths} of $s\in S$ is 
\[\mathsf L(s)=\{\ell(\mathbf z)\mid \mathbf z\in\mathsf Z(s)\}.\] It is easy to see that $\mathsf L(s) \subset [0,s]$, and consequently $\mathsf L(s)$ is finite. So, it is of the form $\mathsf L(s)=\{l_1,\ldots ,l_k\}$ for some positive integers $l_1<l_2<\cdots <l_k$.  The set
\[
\Delta(s)=\{l_i-l_{i-1} \mid 2\le i\le k\}
\]
is known as the \emph{Delta set} of $s \in S$ (or set of distances), and 
the \emph{Delta set} of $S$ is defined as
\[\Delta(S)=\cup_{s \in S}\Delta(s).\]

Now, we define 
\[
M_S=\{ \mathbf v=(v_1,v_2,v_3)\in \mathbb Z^3\mid v_1n_1+v_2n_2+v_3n_3=0\}.
\]


We can extend the $\ell$ function to elements in $M_S$: for $\mathbf v=(v_1,v_2,v_3)\in \mathbb Z^3$, set 
\[
\ell(\mathbf v)=v_1+v_2+v_3.
\]
Note that $\ell$ is a linear map.

The aim of this paper is to compute the Delta set of $S$ by using Euclid's algorithm with two special elements in this set. These special distances are associated to particular elements of $S$, called the Betti elements. The definition of Betti element relies in the construction of a graph associated to the elements in the semigroup. Let $s\in S$. The graph $\nabla_s$ is the graph with vertices $\mathsf Z(s)$, the set of factorization of $s$, and $\mathbf z\mathbf z'$ is an edge if if $\mathbf z\cdot\mathbf z'\neq 0$, that is, there exists a common nonzero coordinate in both factorizations.

\begin{defn}
	An element $s\in S$ is called a Betti element if and only if $\nabla_s$ is not connected. The set of Betti elements will be denoted by $\mathrm{Betti}(S)$.
\end{defn}

A numerical semigroup with embedding dimension three might have up to three Betti elements. In \cite[Example 8.23]{RG} it is shown that $\mathrm{Betti}(\langle n_1,n_2,n_3\rangle=\{ c_1n_1,c_2n_2,c_3n_3\}$, where $c_i$ is the least positive integer such that $c_in_i\in \langle n_j,n_k\rangle$, $\{i,j,k\}=\{1,2,3\}$. 

\begin{proposition}\cite[Chapter 9]{RG}\label{struct}
	Let $S$ an embedding dimension three numerical then $1\le \sharp \mathrm{Betti}(S)\le 3$. Moreover, $S$ is non-symmetric if and only if  $\sharp \mathrm{Betti}(S)=3$. 
\end{proposition}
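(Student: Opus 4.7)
Both bounds on $\sharp\mathrm{Betti}(S)$ follow immediately from the already-quoted description $\mathrm{Betti}(\langle n_1,n_2,n_3\rangle)=\{c_1n_1,c_2n_2,c_3n_3\}$. The only nontrivial point is that at least one of these three elements is indeed a Betti element: if none of them were, every graph $\nabla_s$ would be connected, which would force $S$ to admit an empty minimal presentation, contradicting that $\{n_1,n_2,n_3\}$ minimally generates a semigroup with three distinct atoms and $\gcd(n_1,n_2,n_3)=1$.

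For the second assertion, my plan is to translate everything into the cardinality $\mu(S)$ of a minimal presentation of $S$, via the standard identity
\[
\mu(S)=\sum_{b\in \mathrm{Betti}(S)}\bigl(c(\nabla_b)-1\bigr),
\]
where $c(\nabla_b)$ denotes the number of connected components of $\nabla_b$. Herzog's classical theorem characterizes symmetric three-generated numerical semigroups as complete intersections, equivalently as those with $\mu(S)=2$. Observe next that whenever $c_in_i\in\mathrm{Betti}(S)$, the factorization with all weight on $n_i$ shares no nonzero coordinate with any factorization of $c_in_i$ lying in $\langle n_j,n_k\rangle$, so $c(\nabla_{c_in_i})\ge 2$ and each Betti element contributes at least $1$ to the sum above. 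Hence $\sharp\mathrm{Betti}(S)=3$ forces $\mu(S)\ge 3$, and $S$ is not symmetric.

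The main obstacle is the converse: when $\sharp\mathrm{Betti}(S)\le 2$, one must show that $\mu(S)=2$ on the nose. The strategy is to exploit the coincidence $c_in_i=c_jn_j$ that occurs as soon as $\sharp\mathrm{Betti}(S)<3$: this coincidence provides extra factorizations of the coinciding Betti element that link the otherwise isolated components of $\nabla_{c_in_i}$, and a careful case analysis on the defining relations $c_in_i=r_{ij}n_j+r_{ik}n_k$ shows that $c(\nabla_b)$ is exactly $2$ for each of the two Betti elements when $\sharp\mathrm{Betti}(S)=2$, and exactly $3$ for the unique Betti element when $\sharp\mathrm{Betti}(S)=1$. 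Either way $\mu(S)=2$, so $S$ is symmetric by Herzog. The detailed verification of the component counts is essentially the content of \cite[Chapter 9]{RG}, which I would cite rather than reproduce.
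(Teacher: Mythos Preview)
The paper does not supply a proof of this proposition at all; it is stated with a bare citation to \cite[Chapter 9]{RG} and used thereafter as a known structural fact. There is therefore no argument in the paper to compare your sketch against.

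That said, your outline is a correct and standard route to the result. The bounds follow from the description $\mathrm{Betti}(S)=\{c_1n_1,c_2n_2,c_3n_3\}$ recalled just before the proposition, and the equivalence with symmetry is exactly Herzog's theorem (symmetric $\Leftrightarrow$ complete intersection $\Leftrightarrow$ $\mu(S)=2$ in embedding dimension three) combined with the identity $\mu(S)=\sum_{b}(c(\nabla_b)-1)$. Your observation that each $c_in_i$, when it is a Betti element, contributes at least one to this sum gives the forward implication cleanly. The only place where genuine work remains is the converse: showing that when $\sharp\mathrm{Betti}(S)\le 2$ the component counts are \emph{exactly} what is needed to force $\mu(S)=2$. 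You defer this to \cite[Chapter 9]{RG}, which is reasonable but means that, in the end, your argument rests on the same reference the paper cites. If you wanted a self-contained proof, the missing piece is to verify directly that a coincidence $c_in_i=c_jn_j$ forces enough connectivity in $\nabla_{c_in_i}$ (and, in the single-Betti case, that $\nabla_{s_1s_2s_3}$ has exactly three components); this is not difficult but does require writing out the factorizations explicitly.
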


\begin{example}
Consider $S=\langle 3,5,7\rangle=\{0,3,5,\to\}$ (the arrow means that all integers greater than $5$ are in the semigroup). 
In this case,  $\mathrm{Betti}(S)=\{10,12,14\}$, and 
		\[
		\begin{array}{rclrclrcl}
		\mathsf Z(10) & = & \{(1,0,1),(0,2,0)\}, & \mathsf{L}(10) & = & \{2\},  & \Delta(10) & = & \emptyset, \\
		\mathsf Z(12) & = & \{(4,0,0),(0,1,1)\}, & \mathsf{L}(12) & = & \{2,4\},  & \Delta(12) & = & \{2\},\\
		\mathsf Z(14) & = & \{(3,1,0),(0,0,2)\}, & \mathsf{L}(14) & = & \{2,4\}, & \Delta(14) & = & \{2\}.
		\end{array}
		\]

Let $S=\langle 6,8,11\rangle$, with Betti elements $22$ and $24$.
	\[
	\begin{array}{rclrclrcl}
	\mathsf Z(22) & = & \{(1,2,0),(0,0,2)\}, & \mathsf{L}(22) & = & \{2,3\}, & \Delta(22) & = & \{1\},\\
	\mathsf Z(24) & = & \{(4,0,0),(0,3,0)\}, & \mathsf{L}(24) & = & \{3,4\}, & \Delta(24) & = & \{1\}.
	\end{array}
	\]
Finally, let $S=\langle 6,10,15\rangle$. Then $\mathrm{Betti}(S)=\{30\}$, 
\[
\mathsf Z(30)=\{(5,0,0),(0,3,0),(0,0,2)\},\  \mathsf L(30)=\{2,3,5\},\  \mathsf \Delta(30)=\{1,2\}.
\]
\end{example} 
More details on the relation between Betti elements and Delta sets can be found in \cite{CGLMS}.

It is straightforward to see that from the factorizations of every $b\in \mathrm{Betti}(S)$, we can construct $\{\mathbf z-\mathbf z'\in M_S\mid \mathbf z\neq \mathbf z', \mathbf z,\mathbf z'\in \mathsf Z(b), b\in \mathrm{Betti}(S)\}$, which generates $M_S$ as a group. 

For instance, for $S=\langle 6,8,11\rangle$, $(1,2,-2)$ and $(4,-3,0)$ generate as a group the set of integer solutions of the equation $6x_1+8x_2+11x_3=0$.  

Since the structure of the Delta set of a non-symmetric numerical semigroup is known, we focus on the study of the symmetric numerical semigroups.

\section{Two cases in the symmetric setting}\label{sec:uno}
Let $S$ be a numerical semigroup minimally generated by $\{n_1,n_2,n_3\}$, and assume that $S$ is symmetric. In this Section, we define the integers and vectors that allow us to study $M_S$ and construct $\Delta(S)$. 

By Proposition \ref{struct}, $\#\mathrm{Betti}(S) \le 2$. Now, we going to choose a suitable basis $\{\mathbf v_1, \mathbf v_2 \}$ for $M_S$ according to $\#\mathrm{Betti}(S)$.

\begin{enumerate}[(1)]
	\item \textbf{A single Betti element.}
	
	\noindent If $\#\mathrm{Betti}(S)=1$, then $(n_1,n_2,n_3)=(s_2s_3,s_1s_3,s_1s_2)$ for some positive pairwise coprime integers $s_1>s_2>s_3$ (see \cite{single}). Moreover, $\mathrm{Betti}(S)=\{s_1s_2s_3\}$, and the set of factorizations of $s_1s_2s_3$ is  $\mathsf Z(s_1s_2s_3)=\{(s_1,0,0),(0,s_2,0),(0,0,s_3)\}$.
	In this setting, it is easy to see that $M_S$ is the group spanned by $\{(s_1,-s_2,0),(0,s_2,-s_3)\}$. We set 
	$\mathbf v_1=(s_1,-s_2,0)$, $\mathbf v_2=(0,s_2,-s_3)$.
	
	\item \textbf{Two Betti elements.}
	
	\noindent If $\#\mathrm{Betti}(S)=2$, we have $(n_1,n_2,n_3)=(am_1,am_2,bm_1+cm_2)$ with $a\ge 2$ and $b+c\ge 2$, and we can also assume that $m_1<m_2$ \cite[Theorem 10.6]{RG}. In this setting \[\mathrm{Betti}(S)=\{a(bm_1+cm_2), am_1m_2\},\]
	with 
	\begin{align*}
	    \mathsf Z(a(bm_1+cm_2))=\Big\{ & \left(b-\left\lfloor\tfrac{b}{m_2}\right\rfloor m_2,c+\left\lfloor\tfrac{b}{m_2}\right\rfloor m_1,0\right),\ldots,(b-m_2,c+m_1,0),(b,c,0), \\ & (b+m_2,c-m_1,0)\ldots, \left(b+\left\lfloor\tfrac{c}{m_1}\right\rfloor m_2,c-\left\lfloor\tfrac{c}{m_1}\right\rfloor m_1,0\right),(0,0,a)\Big\}
	\end{align*} 
	and $\mathsf Z(am_1m_2)=\{(m_2,0,0),(0,m_1,0)\}$. Moreover, $M_S$ is spanned by $\{(m_2,-m_1,0),(b+\lambda m_2,c-\lambda m_1,-a)\}$ 
	for any $\lambda \in \mathbb Z$. We choose 
	$\lambda\in\left\{-\left\lfloor{b}/{m_2}\right\rfloor,\ldots,\left\lfloor{c}/{m_1}\right\rfloor\right\}$ such that $\ell((b+\lambda m_2,c-\lambda m_1,-a))$ is minimal. We define $\mathbf v_1=(m_2,-m_1,0)$ and $\mathbf v_2= (b+\lambda m_2,c-\lambda m_1,-a)$.  
\end{enumerate}

 Now, we define $\delta_1=\ell(\mathbf v_1)$ and $\delta_2=|\ell(\mathbf v_2)|$.  We consider the absolute value for $\ell(\mathbf v_2)$, since it might happen that $\ell(\mathbf v_2)<0$ when there are two Betti elements and  $a>b+c+\lambda(m_2-m_1)$. In order to keep trace of the sign of $\ell(\mathbf v_2)$, let $\mathrm{sgn}$ be the sign function, and set $\sigma=\mathrm{sgn}(\ell(\mathbf v_2))$. In this way, we have $\delta_2=\sigma\ell(\mathbf v_2)$.  
 
 The integers $\delta_1,\delta_2$ just defined are tightly related to $\Delta(S)$.

\begin{proposition}\label{max-delta}
Let $S$ be a symmetric numerical semigroup, and let $\delta_1$ and $\delta_2$ be defined as above. Then \[\max\Delta(S)=\max\{\delta_1,\delta_2\}.\] 
\end{proposition}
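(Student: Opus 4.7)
The plan is to invoke the result from \cite{CGLMS} that $\max\Delta(S)$ is attained at some Betti element, and then to compute $\max\Delta(b)$ for each $b\in\mathrm{Betti}(S)$ directly from the explicit factorizations laid out in Section \ref{sec:uno}. In Case (1), the three factorizations of the unique Betti element $s_1s_2s_3$ have lengths $s_3<s_2<s_1$, so $\Delta(s_1s_2s_3)=\{s_2-s_3,\, s_1-s_2\}$; by the definition of $\mathbf v_1,\mathbf v_2$ these are precisely $\delta_2$ and $\delta_1$, and the conclusion is immediate.

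In Case (2), one Betti element is $am_1m_2$, whose only two factorizations give $\mathsf L(am_1m_2)=\{m_1,m_2\}$ and hence $\Delta(am_1m_2)=\{\delta_1\}$. The heart of the proof concerns $b_\star=a(bm_1+cm_2)$. The planar factorizations $(b+\lambda m_2,\, c-\lambda m_1,\, 0)$ have length $(b+c)+\lambda\delta_1$, so as $\lambda$ ranges over the admissible interval $\{-\lfloor b/m_2\rfloor,\dots,\lfloor c/m_1\rfloor\}$ these lengths form an arithmetic progression of common difference $\delta_1$ between some $l_{\min}$ and $l_{\max}$, while the sporadic factorization $(0,0,a)$ contributes the length $a$. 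Because the $\lambda$ appearing in $\mathbf v_2$ was chosen to minimise $|\ell(\mathbf v_2)|=|(b+c)+\lambda\delta_1-a|$ over this range, $\delta_2$ equals the distance from $a$ to the closest point of the progression.

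The main step is then a short case analysis on where $a$ sits. If $a\notin[l_{\min},l_{\max}]$, then $a$ is adjoined as a new extremum of the sorted set of lengths, the new consecutive difference equals $\delta_2$ exactly, all other gaps remain $\delta_1$, and $\max\Delta(b_\star)=\max\{\delta_1,\delta_2\}$. If $a\in[l_{\min},l_{\max}]$, then either $a$ coincides with some $l_\lambda$ (so $\delta_2=0$) or $a$ lies strictly between two consecutive $l_\lambda$'s, in which case the two new gaps sum to $\delta_1$; in either subcase $\delta_2<\delta_1$ and $\max\Delta(b_\star)\le\delta_1=\max\{\delta_1,\delta_2\}$. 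Combining these bounds with $\delta_1\in\Delta(am_1m_2)$ and, in the exterior subcases, $\delta_2\in\Delta(b_\star)$, yields $\max\Delta(S)=\max\{\delta_1,\delta_2\}$.

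I expect the most delicate point to be the interior subcase, where $\delta_2$ itself need not appear as a gap of $\Delta(b_\star)$: there the argument must lean on the strict inequality $\delta_2<\delta_1$ and on the other Betti element $am_1m_2$ to supply $\delta_1$. Corner configurations such as a one-point progression or $\delta_2=0$ should be handled uniformly, precisely because the minimisation defining $\mathbf v_2$ is carried out over exactly the admissible range of $\lambda$.
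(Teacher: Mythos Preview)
Your proposal is correct and follows essentially the same approach as the paper: invoke \cite{CGLMS} to reduce to the Betti elements, handle the single-Betti case directly from the three lengths $s_3<s_2<s_1$, and in the two-Betti case split according to whether $a$ lies below, above, or inside the arithmetic progression of planar lengths. The paper's case analysis (its cases (1), (2), (3)) matches your exterior/interior split, and its observation that in the interior case $\Delta(b_\star)=\{\delta_1,\delta_2,|\delta_1-\delta_2|\}$ is exactly your remark that the two new gaps sum to $\delta_1$ with the smaller one equal to $\delta_2$.
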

\begin{proof}
We know from \cite{CGLMS} that $\max\Delta(S)=\max\{\max\Delta(b) \mid b\in \mathrm{Betti}(S)\}$. 

If $\#\mathrm{Betti}(S)=1$, since $\mathsf Z(s_1s_2s_3)=\{(s_1,0,0),(0,s_2,0),(0,0,s_3)\}$, we get $\Delta(s_1s_2s_3)=\{s_1-s_2,s_2-s_3\}=\{\delta_1,\delta_2\}$, and the thesis follows. 

If $\#\mathrm{Betti}(S)=2$, we need to study $\mathsf Z(b)$ with $b\in \mathrm{Betti}(S)=\{a(bm_1+cm_2), am_1m_2\}$. 
\begin{enumerate}[$\bullet$]
	\item $\mathsf Z(am_1m_2)=\{(m_2,0,0),(0,m_1,0)\}$. Hence $\Delta(am_1m_2)=\{\delta_1\}$.
	
	\item $\mathsf Z(a(bm_1+cm_2))=\{ (0,0,a)\} \cup \left\{(b+k m_2,c-k m_1,0)\mid k \in \left\{-\left\lfloor{b}/{m_2}\right\rfloor,\ldots,\left\lfloor{c}/{m_1}\right\rfloor\right\}\right\}$.
\end{enumerate}  
Observe that $\mathsf L(a(bm_1+cm_2))=\{a\}\cup \left\{ b+c+k(m_2-m_1)\mid k \in \left\{-\left\lfloor{b}/{m_2}\right\rfloor,\ldots,\left\lfloor{c}/{m_1}\right\rfloor\right\}\right\}$. Consider the following cases.
\begin{enumerate}[(1)]
	\item If $a\le b+c-\lfloor b/m_2\rfloor (m_2-m_1)$, then $\lambda=-\lfloor b/m_2\rfloor$ and $\delta_2=b+c-\lfloor b/m_2\rfloor (m_2-m_1)-a$. Hence $\Delta(a(bm_1+cm_2))$ equals $\{\delta_1,\delta_2\}$, or $\{\delta_1\}$ if $a=b+c-\lfloor b/m_2\rfloor (m_2-m_1)$ (that is, $\delta_2=0$).
	
	\item If $a\ge b+c+\lfloor c/m_1\rfloor (m_2-m_1)$, then $\lambda=\lfloor c/m_1\rfloor$, and we argue as in the previous case.
	
	\item Finally, if $b+c-\lfloor b/m_2\rfloor (m_2-m_1) < a < b+c+\lfloor c/m_1\rfloor (m_2-m_1)$, there exists $k\in \left\{-\left\lfloor{b}/{m_2}\right\rfloor,\ldots,\left\lfloor{c}/{m_1}\right\rfloor -1\right\}$ such that $b+c+k(m_2-m_1) \le a <b+c+(k+1)(m_2-m_1)$. Then $\lambda$ is either $k$ or $k+1$, and $\Delta(a(bm_1+cm_2))=\{\delta_1,\delta_2,|\delta_2-\delta_1|\}$ (unless $\delta_2=0$ and $\Delta(a(bm_1+cm_2))=\{\delta_1\}$). 
\end{enumerate}
In any case, $\max\Delta(S)=\max\{\delta_1,\delta_2\}$.
\end{proof}

The following result is a particular instance of a more general property. 

\begin{proposition}\label{min-delta}\cite[Corollay 3.1]{CGLMS}
	Let $S$ be a symmetric numerical semigroup. Then \[\min\Delta(S)=\gcd\{\delta_1,\delta_2\}.\] 
\end{proposition}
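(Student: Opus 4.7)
The plan is to combine the classical identity $\min \Delta(S) = \gcd \Delta(S)$ from \cite{G} with a direct analysis of length differences using the basis $\{\mathbf v_1, \mathbf v_2\}$ of $M_S$ chosen in Section \ref{sec:uno}. Given that identity, it suffices to show that $\gcd \Delta(S) = \gcd\{\delta_1, \delta_2\}$, and I would establish this by two opposite divisibilities.

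For $\gcd \Delta(S) \mid \gcd\{\delta_1, \delta_2\}$, I would recycle the explicit computations of $\Delta(b)$ for each $b \in \mathrm{Betti}(S)$ already carried out in the proof of Proposition \ref{max-delta}. Those computations show that (whenever they are nonzero) both $\delta_1$ and $\delta_2$ lie in $\Delta(S)$: in the one-Betti case $\Delta(s_1 s_2 s_3) = \{\delta_1, \delta_2\}$, while in the two-Betti case $\delta_1 \in \Delta(am_1m_2)$ and $\delta_2 \in \Delta(a(bm_1+cm_2))$. Hence any common divisor of $\Delta(S)$ divides both $\delta_1$ and $\delta_2$.

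For the reverse divisibility, the key input is that $M_S$ is generated as a group by $\{\mathbf v_1, \mathbf v_2\}$. Given any $s \in S$ and any two factorizations $\mathbf z, \mathbf z' \in \mathsf Z(s)$, the vector $\mathbf z - \mathbf z'$ lies in $M_S$ and therefore admits an expansion $\alpha \mathbf v_1 + \beta \mathbf v_2$ with $\alpha, \beta \in \mathbb Z$. Applying the linear functional $\ell$ yields $\ell(\mathbf z) - \ell(\mathbf z') = \alpha \delta_1 + \beta \sigma \delta_2$, which is an integer multiple of $\gcd\{\delta_1, \delta_2\}$. Since every element of $\Delta(s)$ is the difference of two lengths of factorizations of $s$, each element of $\Delta(S)$ is divisible by $\gcd\{\delta_1, \delta_2\}$, and so is $\gcd \Delta(S)$.

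The only subtlety, and the main (minor) obstacle, is the degenerate case $\delta_2 = 0$: here $\gcd\{\delta_1, \delta_2\} = \delta_1$ and $\delta_2 \notin \Delta(S)$, so the first step gives only $\gcd \Delta(S) \mid \delta_1$. But the second step still forces $\Delta(S) \subseteq \delta_1 \mathbb Z$, and Proposition \ref{max-delta} provides $\max \Delta(S) = \delta_1$, so $\Delta(S) = \{\delta_1\}$ and the claimed equality persists. No further input beyond the preliminaries and Proposition \ref{max-delta} is needed.
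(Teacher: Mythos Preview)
The paper does not supply its own proof of this proposition: it is stated as a particular instance of \cite[Corollary~3.1]{CGLMS} and is simply cited, with the surrounding text also invoking \cite{G} for the identity $\min\Delta(S)=\gcd\Delta(S)$. Your proposal therefore goes beyond what the paper does here, giving a self-contained derivation of the special case from the general result in \cite{G} together with the explicit Betti computations already done in Proposition~\ref{max-delta} and the basis description of $M_S$ from Section~\ref{sec:uno}.

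Your argument is correct. The two divisibilities are exactly the right ones: that $\delta_1,\delta_2\in\Delta(S)$ (whenever nonzero) follows from the case analysis in the proof of Proposition~\ref{max-delta}, and that every length difference lies in $\gcd(\delta_1,\delta_2)\mathbb Z$ follows immediately from $\ell(\alpha\mathbf v_1+\beta\mathbf v_2)=\alpha\delta_1+\beta\sigma\delta_2$. The treatment of the degenerate case $\delta_2=0$ is also handled cleanly. In short, there is nothing to compare against in the paper itself; your proof is a legitimate expansion of what the paper leaves as a citation.
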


Thus, $\max \Delta(S)$ is either $\delta_1$ or $\delta_2$, while in our setting $\min \Delta(S)=\gcd(\delta_1,\delta_2)$, and each element of $\Delta(S)$ is a multiple of this greatest common divisor \cite{G}. 

\begin{remark}
Observe that, in both cases, the vector $\mathbf v_1$ has the first coordinate positive, the second one negative, and the third coordinate equal to zero. Our choice of $\lambda$ in the case $\#\mathrm{Betti}(S)=2$ ensures that the vector $\mathbf v_2$ has first coordinate nonnegative, positive the second, and the third coordinate negative. We will represent this fact as: 
\[\mathbf v_1=(+,-,0) \hbox{ and }\mathbf v_2=(+,+,-).\]
\end{remark}

\begin{proposition}\label{signos-v-deltas}
Let $S$ be a symmetric numerical semigroup with embedding dimension three. Let $\delta_1$, $\delta_2$, $\mathbf v_1$, $\mathbf v_2$ and $\sigma$ be defined as above. 
\begin{enumerate}[(1)]
\item if $\sigma=1$, we have that $\delta_2\mathbf v_1-\sigma\delta_1\mathbf v_2=(?,-,+)$.

\item if $\sigma=-1$, we have that $\delta_2\mathbf v_1-\sigma\delta_1\mathbf v_2=(+,?,-)$.
\end{enumerate}
The symbol ``$\ ?$'' denotes the sign of this coordinate is not determined.
\end{proposition}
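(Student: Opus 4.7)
The plan is to read off the three coordinates of $\mathbf{w} := \delta_2 \mathbf{v}_1 - \sigma \delta_1 \mathbf{v}_2$ one at a time, using only the sign pattern $\mathbf{v}_1 = (+,-,0)$ and $\mathbf{v}_2 = (\geq 0, +, -)$ recorded in the remark preceding the statement, together with $\delta_1 > 0$ and the observation that whenever $\sigma = \pm 1$ we must have $\delta_2 > 0$ (since $\sigma = \mathrm{sgn}(\ell(\mathbf{v}_2))$ is nonzero exactly when $|\ell(\mathbf{v}_2)| = \delta_2 > 0$).

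I would start with the third coordinate, which is the cleanest because $v_{1,3}=0$. There we simply have $w_3 = -\sigma\,\delta_1\, v_{2,3}$ with $v_{2,3} < 0$ (it equals $-s_3$ in the one-Betti case and $-a$ in the two-Betti case) and $\delta_1 > 0$. Hence $w_3 > 0$ when $\sigma = 1$ and $w_3 < 0$ when $\sigma = -1$, matching the prescribed sign in both items.

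Next I would handle the coordinate whose sign is asserted in each case. For item (1), where $\sigma = 1$, the second coordinate is $w_2 = \delta_2 v_{1,2} - \delta_1 v_{2,2}$; both $\delta_2 v_{1,2} \leq 0$ (since $v_{1,2} < 0$ and $\delta_2 \geq 0$) and $-\delta_1 v_{2,2} < 0$ (since $v_{2,2} > 0$ and $\delta_1 > 0$), so $w_2 < 0$ as required. For item (2), where $\sigma = -1$, the first coordinate is $w_1 = \delta_2 v_{1,1} + \delta_1 v_{2,1}$; now $v_{1,1} > 0$ and $\delta_2 > 0$ force $\delta_2 v_{1,1} > 0$, while $v_{2,1} \geq 0$ and $\delta_1 > 0$ give $\delta_1 v_{2,1} \geq 0$, so $w_1 > 0$.

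Finally, I would remark briefly that the remaining coordinate is genuinely undetermined: in item (1) the first coordinate $\delta_2 v_{1,1} - \delta_1 v_{2,1}$ is a difference of nonnegative terms, and in item (2) the second coordinate $\delta_2 v_{1,2} + \delta_1 v_{2,2}$ is a sum of a nonpositive and a positive term; examples in both directions are easy to produce from the explicit descriptions in Section \ref{sec:uno}. No serious obstacle is foreseen: the argument is essentially a bookkeeping exercise on the sign templates $(+,-,0)$ and $(+,+,-)$, and the only point requiring care is to keep the case $\delta_2 = 0$ out of the statement by observing that it forces $\sigma = 0$.
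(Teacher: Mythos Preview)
Your proof is correct and follows essentially the same idea as the paper's: compute the three coordinates of $\delta_2\mathbf v_1-\sigma\delta_1\mathbf v_2$ directly and read off their signs. The only difference is presentational: the paper splits into the subcases $\#\mathrm{Betti}(S)=1$ and $\#\mathrm{Betti}(S)=2$ and substitutes the explicit entries of $\mathbf v_1,\mathbf v_2$ in each, whereas you work once and for all from the common sign templates $\mathbf v_1=(+,-,0)$ and $\mathbf v_2=(\ge 0,+,-)$ recorded in the preceding remark. Your version is slightly more economical (no case split) and makes explicit the small point that $\sigma\in\{\pm1\}$ forces $\delta_2>0$, which the paper uses tacitly; nothing is lost or gained mathematically.
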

\begin{proof}
If $\sigma=1$ and $\#\mathrm{Betti}(S)=1$, we know that $\mathbf v_1=(s_1,-s_2,0)$ and $\mathbf v_2=(0,s_2,-s_3)$. Then
\[
\delta_2\mathbf v_1-\delta_1\mathbf v_2=( \delta_2s_1,-\delta_2s_2-\delta_1s_2,\delta_1s_3)=(\delta_2s_1,-s_2(\delta_2+\delta_1),\delta_1s_3)=(+,-,+).
\]
For $\sigma=1$ and $\#\mathrm{Betti}(S)=2$, we have $\mathbf v_1=(m_2,-m_1,0)$ and $\mathbf v_2=(b+\lambda m_2,c-\lambda m_1,-a)$, whence
\[
\delta_2\mathbf v_1-\delta_1\mathbf v_2=( \delta_2m_2-\delta_1(b+\lambda m_2),-\delta_2m_1-\delta_1(c-\lambda m_1),\delta_1a)=(?,-,+).
\]
Finally if $\sigma=-1$ and $\#\mathrm{Betti}(S)=2$, 
\[
\delta_2\mathbf v_1+\delta_1\mathbf v_2=( \delta_2m_2+\delta_1(b+\lambda m_2),-\delta_2m_1+\delta_1(c-\lambda m_1),-\delta_1a)=(+,?,-).\qedhere
\]
\end{proof}

\section{Euclid's set and the Delta set}

Associated to $\delta_1$ and $\delta_2$ we are going to define its Euclid's set as the set of all integers that appear in the naive implementation of the greatest common divisor algorithm. We will see that precisely this set is the Delta set of the semigroup except to the zero element. 

\begin{proposition}\label{unique}
Let $\delta_1,\delta_2$ be two positive integers, and let $x\in \{1,\ldots, \max\{\delta_1,\delta_2\}\}\cap g\mathbb Z$, where $g=\gcd(\delta_1,\delta_2)$. Then there exist unique $(x_1,x_2)$ and $(x_1',x_2')$ in $\mathbb Z^2$ such that 
\begin{enumerate}
\item $x=x_1 \delta_1+x_2\delta_2$ with $-\delta_1/g<x_2 \le 0< x_1 \le \delta_2/g$,
\item $x=x_1'\delta_1+x_2'\delta_2$ with $-\delta_2/g<x_1' \le 0< x_2' \le \delta_1/g$.
\end{enumerate}
Moreover $(x_1,x_2)=(x_1'+\delta_2/g,x_2'-\delta_1/g)$.
\end{proposition}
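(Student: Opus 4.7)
The plan is to reduce to the coprime setting by writing $d_1 = \delta_1/g$ and $d_2 = \delta_2/g$ (so $\gcd(d_1,d_2)=1$) and $x = gm$ with $m$ a positive integer satisfying $m \le \max\{d_1,d_2\}$. Dividing the defining equations through by $g$, both parts of the proposition become statements about integer solutions of $x_1 d_1 + x_2 d_2 = m$ with range constraints. The overall strategy is to prove (1) in detail, obtain (2) by swapping the roles of $\delta_1$ and $\delta_2$, and derive (3) from the identity $d_2\delta_1 - d_1\delta_2 = 0$.

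I would handle uniqueness in (1) first: if $(x_1,x_2)$ and $(y_1,y_2)$ both meet the constraints, subtracting gives $(x_1-y_1)d_1 = (y_2-x_2)d_2$, and since $\gcd(d_1,d_2)=1$ the integer $d_1$ divides $y_2-x_2$; but $|y_2-x_2|<d_1$ because both second coordinates lie in $(-d_1,0]$, forcing equality of the two pairs. For existence, the integer solutions of $x_1 d_1 + x_2 d_2 = m$ form a single coset of $\mathbb Z(d_2,-d_1)$, so there is a unique representative with $x_2 \in \{-d_1+1,\ldots,0\}$; I then set $x_1 = (m - x_2 d_2)/d_1$.

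The main obstacle is verifying that this $x_1$ automatically satisfies $0 < x_1 \le d_2$. Positivity is immediate from $m \ge 1$ and $-x_2 \ge 0$, which yield $x_1 d_1 \ge 1$. The upper bound is the delicate point and is precisely where the hypothesis $x \le \max\{\delta_1,\delta_2\}$ enters. Assuming $x_1 \ge d_2 + 1$ would give $x_1 d_1 \ge d_1 d_2 + d_1$, whereas
\[
x_1 d_1 = m + (-x_2)d_2 \le \max\{d_1,d_2\} + (d_1-1)d_2 = d_1 d_2 + (\max\{d_1,d_2\} - d_2);
\]
since $\max\{d_1,d_2\} - d_2 < d_1$, the right-hand side is strictly less than $d_1 d_2 + d_1$, a contradiction.

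For (3), I would start from the unique pair $(x_1',x_2')$ of (2) and set $(x_1,x_2) = (x_1'+d_2,\,x_2'-d_1)$; then $x_1\delta_1 + x_2\delta_2 = x + (d_2\delta_1 - d_1\delta_2) = x$, and the shifted coordinates fall in exactly the ranges prescribed by (1). The uniqueness established above then forces this pair to coincide with the $(x_1,x_2)$ of (1), which is the stated identity.
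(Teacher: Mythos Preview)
The paper states this proposition without proof, treating it as an elementary fact about B\'ezout representations; your argument is correct and supplies exactly the details one would expect. The reduction to the coprime pair $(d_1,d_2)$, the uniqueness via $d_1\mid (y_2-x_2)$ with $|y_2-x_2|<d_1$, and the existence via choosing the unique representative with $x_2\in\{-d_1+1,\dots,0\}$ are all standard and sound. Your verification of the upper bound $x_1\le d_2$ is the only place requiring care, and the inequality $\max\{d_1,d_2\}-d_2<d_1$ handles both cases cleanly. The derivation of the relation $(x_1,x_2)=(x_1'+d_2,x_2'-d_1)$ from uniqueness is also correct, since the shifted pair lands in the half-open box of part~(1).
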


We will denote 
\begin{itemize}
    \item $x^{(\delta_1,\delta_2)}=(x_1,x_2)$, where $0<x_1 \le \delta_2/g$, $-\delta_1/g<x_2\le 0$, and $x=x_1\delta_1+x_2\delta_2$, 
    \item $x'^{(\delta_1,\delta_2)}=(x'_1,x'_2)$,  where $-\delta_2/g<x'_1 \le 0$, $0<x_2\le \delta_1/g$, and $x=x_1'\delta_1+x_2'\delta_2$.   
\end{itemize}
In particular, $\delta_1^{(\delta_1,\delta_2)}=(1,0)$, $\delta_1'^{(\delta_1,\delta_2)}= (1-\delta_2/g,\delta_1/g)$, $\delta_2'^{(\delta_1,\delta_2)}=(0,1)$  and $\delta_2^{(\delta_1,\delta_2)}=(\delta_2/g,1-\delta_1/g)$.

We want to depict the set of elements obtained after applying Euclid's greatest common divisor algorithm to $\delta_1$ and $\delta_2$. We will use the naive approach that uses substraction instead of remainders of divisions. Our set will be decomposed in subsets corresponding to the algorithm using remainders. 

\begin{defn}\label{Euclidset}
	Let $\delta_1$ and $\delta_2$ be positive integers, and define $\eta_1=\max\{\delta_1,\delta_2\}$, $\eta_2=\min\{\delta_1,\delta_2\}$ and $\eta_3=\eta_1\bmod \eta_2$. In general, for $j>0$, define
	$\eta_{j+2}=\eta_j-\peb{\frac{\eta_j}{\eta_{j+1}}}\eta_{j+1}=\eta_j\bmod \eta_{j+1}$. Let $i$ be the maximum index such that $\eta_{i+1}>0$. Define
	\[
	\begin{array}{l}
	\mathrm D(\eta_1,\eta_2)=\{\eta_1,\eta_1-\eta_2,\ldots ,\eta_1\bmod \eta_2=\eta_3\}, \\
	\mathrm D(\eta_2,\eta_3)=\{\eta_2,\eta_2-\eta_3,\ldots ,\eta_2\bmod \eta_3=\eta_4\}, \\
	\mathrm D(\eta_3,\eta_4)=\{\eta_3-\eta_4,\ldots ,\eta_3\bmod \eta_4=\eta_5\}, \\
	\cdots \\
	\mathrm D(\eta_{j-1},\eta_j)=\{\eta_{j-1}-\eta_j,\ldots ,\eta_{j-1}\bmod \eta_j=\eta_{j+1}\}, \\
	\cdots \\
	\mathrm
	D(\eta_i,\eta_{i+1})=\{\eta_i-\eta_{i+1},\ldots ,\eta_i\bmod \eta_{i+1}=\eta_{i+2}=0\}. \\
	\end{array}
	\]
	Let us denote by $\mathrm{Euc}(\delta_1,\delta_2)=\bigcup_{i\in I}\mathrm D(\eta_i,\eta_{i+1})$, where $I=\{i\in\mathbb N \mid \eta_{i+1}>0\}$. 
\end{defn}
Observe that $\eta_1\in \mathrm D(\eta_1,\eta_2)$ and $\eta_2\in \mathrm D(\eta_2,\eta_3)$, but this is no longer the case for $i>2$. This is because we want the union in the definition of $\mathrm{Euc}(\delta_1,\delta_2)$ to be disjoint. Also, $\eta_j\in \mathrm D(\eta_{j-2},\eta_{j-1})$ as $\eta_j=\eta_{j-2}-\lfloor \eta_{j-2}/\eta_{j-1}\rfloor\eta_{j-1}$ for any $j>2$.

Notice also that $\mathrm{Euc}(\delta_1,\delta_2)\subset g\mathbb Z$ where $g=\gcd(\delta_1,\delta_2)$.

\begin{example}\label{mainexample-pre}
Let $S=\langle s_2s_3,s_1s_3,s_1s_2\rangle$, with $s_1=548$, $s_2=155$, and $s_3=13$. Then $\mathbf v_1=(548,-155,0)$ and $\mathbf v_2=(0,155,-13)$. Hence
$\delta_1=393$ and $\delta_2=142$. In this setting, 

\begin{itemize}
\item $\mathrm D(393,142)=\{393, 251, 109\}$,
\item $\mathrm D(142,109)=\{142,33\}$,
\item $\mathrm D(109,33)=\{76,43,10\}$, 
\item $\mathrm D(33,10)=\{23,13,3\}$,
\item $\mathrm D(10,3)=\{7,4,1\}$,
\item $\mathrm D(3,1)=\{2,1,0\}$.
\end{itemize}
Then $\mathrm{Euc}(\delta_1,\delta_2)=\{0,1,2,3,4,7,10,13,23,33,43,76,109,142,251,393\}$.
\end{example}

\begin{remark}\label{nota}
It is clear that $\eta_1>\eta_2>\cdots >\eta_i>\eta_{i+1}>\eta_{i+2}=0$, and  for $\eta_i^{(\eta_1,\eta_2)}=(\eta_{i1},\eta_{i2})$ and $\eta_{i+1}'^{(\eta_1,\eta_2)}=(\eta'_{i+1\, 1},\eta'_{i+1\, 2})$, the inequalities $\eta'_{i+1\, 1}\eta_{i1}\le 0$ and $\eta'_{i+1\, 2}\eta_{i2}\le 0$ hold. From these, we have too that, for $q>0$, $|\eta_{i1}|\le |\eta_{i1}-q\eta'_{i+1\, 1}|$ and $|\eta_{i2}|\le |\eta_{i2}-q\eta'_{i+1\, 2}|$ hold. Analogously, considering  $\eta_i'^{(\eta_1,\eta_2)}=(\eta'_{i1},\eta'_{i2})$ and $\eta_{i+1}^{(\eta_1,\eta_2)}=(\eta_{i+1\, 1},\eta_{i+1\, 2})$, we have $|\eta'_{i1}|\le |\eta'_{i1}-q\eta_{i+1\, 1}|$, $|\eta'_{i2}|\le |\eta'_{i2}-q\eta_{i+1\, 2}|$ for $q>0$. Observe that we have an equality only when one of the $\eta_{hk}$ is zero.

As $\eta_1^{(\eta_1,\eta_2)}=(1,0)$ and $\eta_2'^{(\eta_1,\eta_2)}=(0,1)=(-,+)$, we obtain $\eta_3^{(\eta_1,\eta_2)}=(1,-\peb{\eta_1/\eta_2})=(+,-)$ and $\eta_4'^{(\eta_1,\eta_2)}=(-\peb{\eta_2/\eta_3}$, $1+\peb{\eta_2/\eta_3}\peb{\eta_1/\eta_2})=(-,+)$, and so on.

This means that in each step in $\mathrm{Euc}(\delta_1,\delta_2)$, the absolute value of the $(\eta_1,\eta_2)$-coordinates increases, (and is not decreasing only when $x_{hk}=x'_{21}$). We will use this fact in the next propositions.
 \end{remark}

\begin{proposition}\label{premain}
Let the notation and hypotheses be as in Definition \ref{Euclidset}, and let $d\in\{\eta_3,\eta_3+g,\ldots ,\eta_1\}$. Then $d\in \mathrm D(\eta_1,\eta_2)$ if and only if $d_1=1$, with $(d_1,d_2)=d^{(\eta_1,\eta_2)}$ (according to the notation given in Proposition \ref{unique}).
\end{proposition}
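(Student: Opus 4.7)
The plan is to unwind both directions essentially straight from the definition of $\mathrm{D}(\eta_1,\eta_2)$ together with the uniqueness statement of Proposition \ref{unique}. Recall that $\mathrm{D}(\eta_1,\eta_2)=\{\eta_1-k\eta_2\mid 0\le k\le\peb{\eta_1/\eta_2}\}$, so membership in $\mathrm{D}(\eta_1,\eta_2)$ is literally the statement that $d$ can be written as $1\cdot\eta_1+(-k)\cdot\eta_2$ for a suitable nonnegative $k$; the task is to check that ``suitable'' matches exactly the inequalities that single out $d^{(\eta_1,\eta_2)}$.

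For the forward direction, assume $d\in\mathrm{D}(\eta_1,\eta_2)$ and write $d=\eta_1-k\eta_2$ with $0\le k\le\peb{\eta_1/\eta_2}$. Then $(1,-k)$ is a representation $d=1\cdot\eta_1+(-k)\eta_2$. I need to show this is the one characterizing $d^{(\eta_1,\eta_2)}$. The bounds $0<1\le\eta_2/g$ are immediate since $\eta_2\ge g$, and $-k\le 0$ is clear. The only nontrivial inequality is $-k>-\eta_1/g$; but from $d>0$ one gets $k<\eta_1/\eta_2\le\eta_1/g$ (using $\eta_2\ge g$), which is exactly what is needed. By uniqueness in Proposition \ref{unique}, $d^{(\eta_1,\eta_2)}=(1,-k)$, so $d_1=1$.

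For the converse, suppose $d^{(\eta_1,\eta_2)}=(1,d_2)$ with $-\eta_1/g<d_2\le 0$. Writing $k=-d_2\in\mathbb N$, we have $d=\eta_1-k\eta_2$. To conclude $d\in\mathrm{D}(\eta_1,\eta_2)$ it suffices to show $k\le\peb{\eta_1/\eta_2}$, and this is equivalent to $d\ge\eta_1-\peb{\eta_1/\eta_2}\eta_2=\eta_3$, which is the hypothesis on $d$.

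I do not expect any genuine obstacle here: both implications collapse to checking that the inequalities $0<x_1\le\eta_2/g$ and $-\eta_1/g<x_2\le 0$ characterizing the representation $d^{(\eta_1,\eta_2)}$ agree with the inequalities $0\le k\le\peb{\eta_1/\eta_2}$ characterizing membership in $\mathrm{D}(\eta_1,\eta_2)$ when $(x_1,x_2)=(1,-k)$. The only small thing to double-check is that $d>0$ is really needed (it forces the strict inequality $-k>-\eta_1/g$), which is why the statement restricts $d$ to $\{\eta_3,\eta_3+g,\ldots,\eta_1\}$ with $\eta_3$ implicitly positive.
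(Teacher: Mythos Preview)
Your proof is correct and follows essentially the same approach as the paper: both directions reduce to matching the representation $(1,-k)$ against the bounds in Proposition~\ref{unique}. The only cosmetic difference is that for the converse the paper derives $-\eta_1/\eta_2<k\le 0$ from the inequalities $0<d\le\eta_1$, whereas you obtain $k\le\peb{\eta_1/\eta_2}$ directly from the hypothesis $d\ge\eta_3$; these are equivalent routes to the same conclusion.
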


\begin{proof}
Let $d\in\{\eta_3, \eta_3+g,\ldots, \eta_1\}$. If $d\in \mathrm D(\eta_1,\eta_2)$, then $d=\eta_1-k\eta_2$ for some $k\in \{0,\ldots,\lfloor \eta_1/\eta_2\rfloor\}$. Hence $d^{(\eta_1,\eta_2)}=(1,-k)$. For the converse, if $0<d=\eta_1+k\eta_2\le \eta_1$ for some $k\in \mathbb Z$, we obtain $-\eta_1<k\eta_2\le 0$, whence $- \eta_1/\eta_2< k\le 0$. Now, as $g=\gcd(\delta_1,\delta_2)\leq \eta_2$ we have $-\eta_1/g\leq - \eta_1/\eta_2<k$  and by Proposition \ref{unique}, $k$ will be equal to $d_2$, and consequently $d\in \mathrm D(\eta_1,\eta_2)$.
\end{proof}

\begin{proposition}\label{main}
Let the notation and hypotheses be as in Definition \ref{Euclidset}. 
	\begin{enumerate}[(1)]
		\item Let $x\in \{\eta_3,\eta_3+g,\ldots, \eta_1\}\setminus (\mathrm D(\eta_1,\eta_2)\cup \mathrm D(\eta_2,\eta_3))$.
		     \begin{enumerate}[(a)]
		     \item If $x^{(\eta_1,\eta_2)}=(x_1,x_2)=(+,-)$, then there exists an integer $d<x$, $d\in \mathrm D(\eta_1,\eta_2)$, such that $d^{(\eta_1,\eta_2)}=(d_1,d_2)$ with $0<d_1<x_1$ and $x_2<d_2<0$. 
		     \item If $x'^{(\eta_1,\eta_2)}=(x'_1,x'_2)=(-,+)$, then there exists an integer $d<x$, $d\in \mathrm D(\eta_2,\eta_3)$, such that $d'^{(\eta_1,\eta_2)}=(d_1',d_2')$ with $x_1'\le d_1'\le 0$ and $0<d_2'<x_2'$.
		     
		     The case $x'_1=d'_1=0$ corresponds to $x=d_2\eta_2$, that is, only when $x$ is a multiple of $\eta_2$.
	        \end{enumerate}
        \item In general, for any $x$ multiple of $g$ such that $x<\eta_3$, $x\notin \mathrm{Euc}(\delta_1,\delta_2)$, consider $j+1=\min\{k\in I\mid \eta_k<x\}$ and then $x\in \{\eta_{j+1},\eta_{j+1}+g,\ldots, \eta_{j}\}\setminus (\mathrm D(\eta_{j-1},\eta_j)\cup \mathrm D(\eta_j,\eta_{j+1})) = \{\eta_{j+1},\eta_{j+1}+g,\ldots, \eta_{j}\}\setminus \mathrm{Euc}(\delta_1,\delta_2)$.
        \begin{enumerate}[(a)]
        	\item If  $x^{(\eta_1,\eta_2)}=(x_1,x_2)=(+,-)$, then there exists an integer $d<x$, $d\in \mathrm D(\eta_{j-1},\eta_{j})\cup \mathrm D(\eta_{j},\eta_{j+1})$, such that $d^{(\eta_1,\eta_2)}=(d_1,d_2)$ with $0<d_1<x_1$ and $x_2<d_2<0$.
        	\item If $x'^{(\eta_1,\eta_2)}=(x'_1,x'_2)=(-,+)$, then there exists an integer $d<x$, $d\in \mathrm D(\eta_{j-1},\eta_{j})\cup \mathrm D(\eta_{j},\eta_{j+1})$, such that $d'^{(\eta_1,\eta_2)}=(d_1',d_2')$ with $x_1'<d_1'<0$ and $0<d_2'<x_2'$.
        \end{enumerate}
	\end{enumerate}
\end{proposition}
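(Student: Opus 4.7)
My plan is to handle part (1) by explicit construction and then to deduce part (2) by induction on $j$, reducing it to the same construction applied one level deeper in the Euclid sequence.

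For part (1)(a), Proposition \ref{premain} tells me that $x\notin D(\eta_1,\eta_2)$ forces $x_1\ge 2$; a short range estimate using $x\le\eta_1$ and $x_1\ge 2$ then shows $|x_2|\ge 2$ (if $|x_2|=1$, then $x=x_1\eta_1-\eta_2\ge 2\eta_1-\eta_2>\eta_1$). Setting $k=\min\bigl(|x_2|-1,\peb{\eta_1/\eta_2}\bigr)$ and $d=\eta_1-k\eta_2$ gives an element of $D(\eta_1,\eta_2)$ with $d^{(\eta_1,\eta_2)}=(1,-k)$, which automatically yields the coordinate inequalities $0<1<x_1$ and $x_2<-k<0$. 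For $d<x$, the first sub-case reduces to $(x_1-1)\eta_1>\eta_2$ (true since $x_1\ge 2$), and the second to $x>\eta_3$, which is strict because $\eta_3\in D(\eta_1,\eta_2)$ while $x$ is not. Part (1)(b) is entirely symmetric using $D(\eta_2,\eta_3)$, whose elements $\eta_2-k\eta_3$ have primed $(\eta_1,\eta_2)$-coordinates $(-k,\,1+k\peb{\eta_1/\eta_2})$; the degenerate case $x_1'=0$ (so $x$ is a multiple of $\eta_2$) is handled by $d=\eta_2$, and in the remaining range $k=\min\bigl(|x_1'|,\peb{\eta_2/\eta_3}\bigr)$ does the job, with uniqueness in Proposition \ref{unique} combined with $x>0$ delivering $x_2'>d_2'$, hence $d<x$.

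For part (2), I induct on $j$, viewing $(\eta_{j-1},\eta_j)$ as the ``top pair'' of a shifted Euclid sequence. The construction of part (1) applied to this shifted pair produces $d\in D(\eta_{j-1},\eta_j)\cup D(\eta_j,\eta_{j+1})$ satisfying the analogue of the desired inequalities in the $(\eta_{j-1},\eta_j)$-coordinate system. To translate these inequalities back to the $(\eta_1,\eta_2)$-coordinates that appear in the conclusion, I invoke Remark \ref{nota}: the alternating sign pattern of $\eta_h^{(\eta_1,\eta_2)}$ and the monotonicity $|\eta_{h1}|\le|\eta_{h1}-q\eta'_{h+1\,1}|$ (together with the analogous inequality for second coordinates) imply that shrinking $(\eta_{j-1},\eta_j)$-coordinates shrinks $(\eta_1,\eta_2)$-coordinates in absolute value while preserving their signs, which is exactly what is asked for.

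The main obstacle is the coordinate bookkeeping in part (2): the parity of $j$ controls which of the primed/unprimed representations is relevant and which sign pattern appears, so the translation of case (a) at level $j$ back to the $(\eta_1,\eta_2)$-system must be carried out case by case on the parity of $j$. Making every inequality strict uses the last sentence of Remark \ref{nota} (equality holds only when some coordinate $\eta_{hk}$ vanishes), which is prevented by the hypothesis $x\notin\mathrm{Euc}(\delta_1,\delta_2)$; once these sign and strictness checks are done, the induction closes cleanly.
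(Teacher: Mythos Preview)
Your proposal is correct and follows the same strategy as the paper: construct $d$ explicitly in part (1), then for part (2) apply the part (1) construction at level $(\eta_{j-1},\eta_j)$ and translate the resulting coordinate inequalities back to $(\eta_1,\eta_2)$ one step at a time (the paper writes out this translation explicitly via $\eta_j=\eta_{j-2}-\peb{\eta_{j-2}/\eta_{j-1}}\eta_{j-1}$, which is exactly what your appeal to Remark \ref{nota} and the parity bookkeeping encode). Your choice of $d$ in (1)(a) and (1)(b) differs cosmetically from the paper's---you select $d$ via the coordinate-driven index $k=\min(|x_2|-1,\peb{\eta_1/\eta_2})$ (respectively $k=\min(|x_1'|,\peb{\eta_2/\eta_3})$), whereas the paper takes $d$ to be the largest element of $D(\eta_1,\eta_2)$ strictly below $x$ (respectively splits on $x>\eta_2$ versus $\eta_3<x<\eta_2$, taking $d=\eta_2$ in the former case)---but both constructions yield the required inequalities.
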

\begin{proof}
\begin{enumerate}[(1)]
\item We study each case independently.
\begin{enumerate}[(a)]
    \item As $\eta_1-\lfloor \frac{\eta_1}{\eta_2} \rfloor \eta_2= \eta_3 < x < \eta_1$ and $x\not\in \mathrm D(\eta_1,\eta_2)\cup\mathrm D(\eta_2,\eta_3)$, there exists an integer $k$ such that $\eta_1+k\eta_2<x<\eta_1+(k+1)\eta_2$ and $-\lfloor \frac{\eta_1}{\eta_2}\rfloor < k < 0$. Set $d=\eta_1+k\eta_2$. Then $d\in \mathrm D(\eta_1,\eta_2)$. Also, $\eta_2\ge g$ and $\delta_1\le \eta_1$, thus $-\frac{\delta_1}g\le -\frac{\eta_1}{\eta_2}\le \lceil -\frac{\eta_1}{\eta_2}\rceil= -\lfloor \frac{\eta_1}{\eta_2}\rfloor<k$. This implies that $d^{(\eta_1,\eta_2)}=(1,k)$. It is clear that $x_1\ge 2$ by Proposition \ref{premain}, and as  $d_1=1$ we have that $0<d_1<x_1$. Also, as  $x=x_1 \eta_1+x_2 \eta_2$ and $x_1\ge 2$, and we obtain that $2\eta_1+x_2 \eta_2\le x <\eta_1+(k+1)\eta_2$. Consequently, $x_2 \eta_2<\eta_1+x_2 \eta_2<(k+1)\eta_2$. Thus, $x_2<k+1$. Observe that $x_2\neq k$, since otherwise $x\ge 2\eta_1+k\eta_2> \eta_1+\eta_2+k\eta_2= \eta_1+(k+1)\eta_2$, a contradiction. Hence $x_2<k=d_2<0$.
    \item For $(x'_1,x'_2)$ we distinguish two cases.
     \begin{enumerate}[(i)]
      	\item If $x>\eta_2$, it suffices to take $d=\eta_2 \in \mathrm D(\eta_{2},\eta_{3})$. We have that $d_1'=0$ and $d_2'=1$. In light of Proposition \ref{unique}, it follows that $x_1'\le d_1'=0$ and $0<d_2'<x_2'$. It is clear that  $x'_2\neq 1$, since otherwise we would have  $x=x'_1\eta_1+x'_2\eta_2=x'_1\eta_1+\eta_2\le d'_1\eta_1+\eta_2=\eta_2$, that is, $x\le \eta_2$, a contradiction.
       	\item If $\eta_3<x<\eta_2$, consider $\mathrm D(\eta_2,\eta_3)=\{\eta_2,\eta_2-\eta_3,\eta_2-2\eta_3,\ldots ,\eta_4\}$. As $x\notin \mathrm D(\eta_2,\eta_3)$, by denoting   $x^{(\eta_2,\eta_3)}=(x_2,x_3)=(+,-)$ we can, as above, choose and integer $h$ such that $\eta_2+h\eta_3<x<\eta_2+(h+1)\eta_3$. Then, taking $d=\eta_2+h\eta_3$ and arguing as in (a) with $d=\eta_2+h\eta_3$ and $x=x_2\eta_2+x_3\eta_3$,  we have that  $0<1<x_2$ and $x_3<h<0$. Now, as  $\eta_3=\eta_1\bmod \eta_2=\eta_1-\peb{{\eta_1}/{\eta_2}}\eta_2$, we can rewrite
        	\begin{align*}
        	x & =  x_2\eta_2+x_3(\eta_1-\peb{{\eta_1}/{\eta_2}}\eta_2) = x_3\eta_1+(x_2-x_3\peb{{\eta_1}/{\eta_2}})\eta_2, \\
        	d & =  \eta_2+h(\eta_1-\peb{{\eta_1}/{\eta_2}}\eta_2) = h\eta_1+(1-h\peb{{\eta_1}/{\eta_2}})\eta_2. 
        	\end{align*}
        	Thus, $(x_1',x_2')^{(\eta_1,\eta_2)}=(x_3,x_2-x_3\peb{{\eta_1}/{\eta_2}})$ and $(d_1',d_2')^{(\eta_1,\eta_2)}=(h,1-h\peb {{\eta_1}/{\eta_2}})$. Moreover, it is clear that $x_1'=x_3<h=d_1'<0$.
        	We also have that $-x_3\peb {{\eta_1}/{\eta_2}} > -h\peb{{\eta_1}/{\eta_2}}>0$ and $x_2>1>0$. Hence  $x_2'=x_2-x_3\peb {{\eta_1}/{\eta_2}}>1-h\peb {{\eta_1}/{\eta_2}}=d_2'>0$. 
     \end{enumerate}
    \end{enumerate}    
        \item For the general case, we will follow the arguments of the preceding cases. It is clear that for all suitable $j$,  $\gcd(\eta_j,\eta_{j+1})= \gcd(\delta_1,\delta_2)=g$. Hence we have the following.
          \begin{enumerate}[$\bullet$]
          	\item Observe that $j\in I$ implies $\eta_{j+1}>0$, so an element $d\in\{\eta_{j+1},\eta_{j+1}+g,\ldots ,\eta_{j}\}$ is in $\mathrm D(\eta_{j-1},\eta_j)\setminus \{\eta_j\}$ if and only if $d^{(j-1)}_1=1$, where   $d^{(\eta_{j-1},\eta_j)}= (d^{(j-1)}_1,d^{(j-1)}_2)$. This is the same argument used in Proposition \ref{premain}, applied for $\eta_1=\eta_{j-1}$ and for $\eta_2=\eta_j$.  
          	\item Working at this $(j-1,j)$ level, it is also clear that, if $x\in \{\eta_{j+1},\eta_{j+1}+g,\ldots, \eta_{j}\}\setminus (\mathrm D(\eta_{j-1},\eta_j)\cup \mathrm D(\eta_j,\eta_{j+1}))$, then 
          	\begin{enumerate}[(a)]
          		\item if $x^{(\eta_{j-1},\eta_j)}=(x^{(j-1)}_1,x^{(j-1)}_2)=(+,-)$, then there exists $d<x$, $d\in \mathrm D(\eta_{j-1},\eta_{j})$, such that $d^{(\eta_{j-1},\eta_j)}=(d^{(j-1)}_1,d^{(j-1)}_2)$ with $0<d^{(j-1)}_1<x^{(j-1)}_1$ and $x^{(j-1)}_2<d^{(j-1)}_2<0$; and
          		\item if $x'^{(\eta_{j-1},\eta_j)}=(x'^{(j-1)}_1,x'^{(j-1)}_2)=(-,+)$, then there exists $d<x$, $d\in \mathrm D(\eta_{j},\eta_{j+1})$, such that $d'^{(\eta_{j-1},\eta_j)}=(d'^{(j-1)}_1,d'^{(j-1)}_2)$ with $x'^{(j-1)}_1\le d'^{(j-1)}_1\le 0$ and $0<d'^{(j-1)}_2<d'^{(j-1)}_2$.
          	\end{enumerate}
          	We only need to apply the preceding cases at this $(j-1,j)$ level.
          \end{enumerate}
       	It remains to prove that the inequalities for the $(j-1,j)$ level (denoted by the bracketed superscript) can be translated to the first level. To this end, we proceed as in the previous point.
       	  \begin{enumerate}
       	  	\item Assume that $0\le d^{(j-1)}_1<x^{(j-1)}_1$ and $x^{(j-1)}_2<d^{(j-1)}_2<0$. Notice that  $d^{(j-1)}_1=0$ if and only if $d=\eta_{j}$. As $x=x^{(j-1)}_1\eta_{j-1}+x^{(j-1)}_2\eta_j$, $d=d^{(j-1)}_1\eta_{j-1}+d^{(j-1)}_2\eta_j$ and $\eta_j=\eta_{j-2}-\peb {\frac{\eta_{j-2}}{\eta_{j-1}}}\eta_{j-1}$, we have that 
       	      \[
       	       	\begin{aligned}
       	       	x & =  x^{(j-1)}_1\eta_{j-1}+x^{(j-1)}_2\left(\eta_{j-2}-\peb{\tfrac{\eta_{j-2}}{\eta_{j-1}}} \eta_{j-1}\right ) \\
       	       	 & =  x^{(j-1)}_2\eta_{j-2}+\left(x^{(j-1)}_1-x^{(j-1)}_2\peb{\tfrac{\eta_{j-2}}{\eta_{j-1}}}\right )\eta_{j-1}, \\
       	       	d & =   d^{(j-1)}_1\eta_{j-1}+d^{(j-1)}_2\left(\eta_{j-2}-\peb{\tfrac{\eta_{j-2}}{\eta_{j-1}}}\eta_{j-1}\right ) \\
       	       	& =  d^{(j-1)}_2\eta_{j-2}+\left(d^{(j-1)}_1-d^{(j-1)}_2\peb{\tfrac{\eta_{j-2}}{\eta_{j-1}}}\right )\eta_{j-1}. 
       	       	\end{aligned}
       	      \]
       	      From these equalities we can deduce that $x'^{(j-2)}_1=x^{(j-1)}_2<d^{(j-1)}_2=d'^{(j-2)}_1<0$ and  $x'^{(j-2)}_2= x^{(j-1)}_1-x^{(j-1)}_2\peb{\tfrac{\eta_{j-2}}{\eta_{j-1}}}> d^{(j-1)}_1-d^{(j-1)}_2\peb{\tfrac{\eta_{j-2}}{\eta_{j-1}}}= d'^{(j-2)}_2>0$, where $x'^{(\eta_{j-2},\eta_{j-1})}=(x'^{(j-2)}_1,x'^{(j-2)}_2)=(-,+)$, and $d'^{(\eta_{j-2},\eta_{j-1})}=(d'^{(j-2)}_1,d'^{(j-2)}_2)=(-,+)$. Notice that we can ensure that in this setting all the inequalities are strict.
        	     \item Assume that  $0\ge d'^{(j-1)}_1\ge x'^{(j-1)}_1$ and $x'^{(j-1)}_2>d'^{(j-1)}_2>0$. Since $x=x'^{(j-1)}_1\eta_{j-1}+x'^{(j-1)}_2\eta_j$, $d=d'^{(j-1)}_1\eta_{j-1}+d'^{(j-1)}_2\eta_j$ and $\eta_j=\eta_{j-2}-\peb{\frac{\eta_{j-2}}{\eta_{j-1}}}\eta_{j-1}$ we have 
       	     \[
       	     \begin{aligned}
       	     x & =  x'^{(j-1)}_1\eta_{j-1}+x'^{(j-1)}_2\left(\eta_{j-2}-\peb{\tfrac{\eta_{j-2}}{\eta_{j-1}}}\eta_{j-1}\right ) \\
       	        & = x'^{(j-1)}_2\eta_{j-2}+\left(x'^{(j-1)}_1-x'^{(j-1)}_2\peb{\tfrac{\eta_{j-2}}{\eta_{j-1}}}\right )\eta_{j-1}, \\
       	     d  & =   d'^{(j-1)}_1\eta_{j-1}+d'^{(j-1)}_2\left(\eta_{j-2}-\peb{\tfrac{\eta_{j-2}}{\eta_{j-1}}}\eta_{j-1}\right ) \\
       	     & = d'^{(j-1)}_2\eta_{j-2}+\left (d'^{(j-1)}_1-d'^{(j-1)}_2\peb{\tfrac{\eta_{j-2}}{\eta_{j-1}}}\right )\eta_{j-1}. 
       	     \end{aligned}
       	     \]
       	     Again, it follows that  $x^{(j-2)}_1=x'^{(j-1)}_2>d'^{(j-1)}_2=d^{(j-2)}_1>0$ and $x^{(j-2)}_2= x'^{(j-1)}_1-x'^{(j-1)}_2\peb{\tfrac{\eta_{j-2}}{\eta_{j-1}}}<d'^{(j-1)}_1-d'^{(j-1)}_2\peb{\tfrac{\eta_{j-2}}{\eta_{j-1}}}=d^{(j-2)}_2\le 0$.
       	  \end{enumerate}
Notice that the inequality is strict because $\peb{\frac{\eta_{j-2}}{\eta_{j-1}}}\ge 1$ and we have $x'^{(j-1)}_2<d'^{(j-1)}_2$. Also $d^{(j-1)}_1=0$ implies $d=d'^{(j-1)}_2\eta_{j-2}$, but since $d\in \mathrm {Euc}(\delta_1,\delta_2)$, we deduce $d'^{(j-1)}_2=1$, that is, we always can take $d=\eta_{j+1}$ in the $(-,+)$ case to start the process.
       	  
In this way, we can apply alternatively the preceding processes to obtain the desired result. Notice that only if we start with (b), we can have $d'^{j-1}_1=0$. Hence, only when $j-1=1$ (if we perform another step, when applying (a) the inequality becomes strict), we will have that the $(\eta_1,\eta_2)$-coordinates of $d$ would be  $(0,1)$, that is, $d=\eta_2$.
       	  
However, as the statement of (3) assumes $j>2$, the inequalities will always be strict.\qedhere
 \end{enumerate}
\end{proof}

We will refer to $d$ defined in Proposition \ref{main} as the \emph{basement} of $x=(x_1,x_2)$, denoted by $\mathrm{bsm}(x)$. Notice that the basement of $x$ may not be unique: however, for our purposes, we will consider as the basement of $x$ any $d$ satisfying the properties stated in Proposition \ref{main}. 

\begin{example}\label{mainexample-pre2}
In the setting of Example \ref{mainexample-pre}, consider $x=35$. Since $33=\eta_4 \le x \le 109 = \eta_3$, according to Proposition \ref{main} we can take $j=3$.
As $35=-13\eta_3+44\eta_4=20\eta_3-65\eta_4$; we have $x^{(\eta_3,\eta_4)}=(20,-65)$ and $x'^{(\eta_3,\eta_4)}=(-13,44)$. Remember that these decompositions are unique using Proposition \ref{unique}.

For $x'^{(\eta_3,\eta_4)}=(-13,44)$ we know that $d'=\eta_4=33$ and  $d'^{(\eta_3,\eta_4)}=(0,1)$; while we look for $d\in\mathrm{D}(\eta_3,\eta_4)=\{76,43,10\}$. As $10<x=35<43$ we take $d=10$ and then $d^{(\eta_3,\eta_4)}=(1,-3)$. If we translate them to level $(\delta_1,\delta_2)$, we have: $35=-57\delta_1+158\delta_2=85\delta_1-235\delta_2$, and $(x_1,x_2)=(85,-235)$ and $(x_1'x_2')=(-57,158)$. Thus, considering $d'=\eta_4=33$, as $33=-\eta_1+3\eta_2$, we have $d'^{(\delta_1,\delta_2)}=(d'_1,d'_2)=(-1,3)$. For the other case, by considering $d=10$,  we obtain $10=4\eta_1-11\eta_2$, that is, $d^{(\delta_1,\delta_2)}= (d_1,d_2)=(4,-11)$. So we have $x'_1<d'_1<0<d'_2<x'_2$ and $x_2<d_2<0<d_1<x_1$.

Observe that $33$ is the $d'$ that yields Proposition \ref{main}, but it may happen that there are other elements in $\mathrm{Euc}(\delta_1,\delta_2)$ satisfying our purposes too.

It is not difficult to see that for any element in $\{34,...,42\}$ we obtain the same results, that is, $d=10$ and $d'=33$, while if we take a number comprised between $44$ and $75$, we will obtain $d=43$ and $d'=33$.

Let us, now, consider $x=15$ which is between $\eta_5=10$ and $\eta_4=33$. In this case $j=4$, then we can write  $15=-5\eta_4+18\eta_5=5\eta_4-15\eta_5$. So, for $x'^{(\eta_4,\eta_5)}=(-5,18)$ we know that $d'=\eta_5=10$ and  $d'^{(\eta_4,\eta_5)}=(0,1)$; while as $d\in\mathrm{D}(\eta_4,\eta_5)=\{23,13,3\}$, we can take $d=13<x=15$ which yields $d^{(\eta_4,\eta_5)}=(1,-2)$, satisfying Proposition \ref{main} as  $x^{(\eta_4,\eta_5)}=(5,-15)$. Observe that, in this case, when we compute the basements at the first level, we obtain $d=10$ because $d^{(\delta_1,\delta_2)}=(4,-11)$, while $x^{(\delta_1,\delta_2)}=(77,-213)$, and $d'=33$ with $d'^{(\delta_1,\delta_2)}=(d'_1,d'_2)=(-1,3)$ for $x'^{(\delta_1,\delta_2)}=(-65,180)$, changing the roles of $d$ and $d'$ from level $j=4$ to the first level.



\end{example}

Our next goal is to associate $\mathrm{bsm}(x)$ to a vector in $M_S$. We will have two such vectors per case, associated to the decompositions in Proposition \ref{unique}.

As above $g=\gcd(\delta_1,\delta_2)$. For  $x\in \{g,2g,\ldots,\max\{\delta_1,\delta_2\}\}$, and $\{\mathbf v_1, \mathbf v_2\}$ the basis for $M_S$ chosen in Section \ref{sec:uno}, set 
\begin{equation}\label{w-x}
\mathbf w_x=x_1\mathbf v_1+\sigma x_2\mathbf v_2, \quad \mathbf w'_x=x'_1\mathbf v_1+ \sigma x'_2\mathbf v_2,
\end{equation}
with $(x_1,x_2)=x^{(\delta_1,\delta_2)}$ and $(x_1',x_2')=x'^{(\delta_1,\delta_2)}$.
Notice that in this setting  $\ell(\mathbf w_x)=x_1\delta_1+x_2\delta_2= x= x_1'\delta_1+x_2'\delta_2=\ell(\mathbf w_{x}')$.

In the following, we will use the bracketed subscript to refer to the coordinates of vectors. For instance, we would have, for a generic vector $\mathbf v$, $\mathbf v=(v_{(1)},v_{(2)},v_{(3)})$. In particular,  
we will use the following notation to refer to the coordinates of the vectors $\mathbf w_x$ and $\mathbf w_x'$ we just introduced: 
\[\mathbf w_x=(w_{x_{(1)}},w_{x_{(2)}},w_{x_{(3)}}) \hbox{ and }\mathbf w_x'=(w_{x_{(1)}}',w_{x_{(2)}}',w_{x_{(3)}}').\]
As $x_1>0$ and $x_2\le 0$, following an argument similar to the proof of Proposition \ref{signos-v-deltas}, it can be shown that the signs of the coordinates of $\mathbf w_x$ and $\mathbf w_x'$ are as in Table \ref{signs-vx}.
\begin{table}[h]
\begin{center}
\begin{tabular}{c|cc}
$\sigma$   & $\mathbf w_x$ & $\mathbf w'_x$ \\
\hline
1 & $(?,-,+)$ & $(?,+,-)$ \\
-1  & $(+,?,-)$ & $(-,?,+)$ 
\end{tabular}
\end{center}
\caption{Signs for $\mathbf w_x$ and $\mathbf w_x'$}
\label{signs-vx}
\end{table}

Remember that $\mathbf v_1=(+,-,0)$ and $\mathbf v_2=(+,+,-)$.

The following corollary shows that, for $x$ as in Proposition \ref{main}, the vector $\mathrm{bsm}(x)$ has two coordinates that are in absolute value lower than those of $\mathbf w_x$ or $\mathbf w_x'$.

\begin{corollary}\label{elmenor}
Let the notations and hypotheses be as above. Let $\mathbf v=a_1 \mathbf v_1+a_2\sigma\mathbf v_2$, with $a_1$ and $a_2$ integers such that $a_1a_2\le 0$, $-\delta_2/g<a_1\le \delta_2/g$ and  $-\delta_1/g<a_2\le \delta_1/g$. Set $x=\ell(\mathbf v)\in \{g,2g,\ldots, \max\{\delta_1,\delta_2\}\}\setminus \mathrm {Euc}(\delta_1,\delta_2)$, and let $d=\mathrm{bsm}(x)$. 
		\begin{enumerate}[(i)]
			\item If $a_1>0$ (that is, $\mathbf v=\mathbf w_x$), there exists $i\in\{1,2\}$ such that $|w_{d_{(i)}}|<|w_{x_{(i)}}|$  and $|w_{d_{(3)}}|<|w_{x_{(3)}}|$. Moreover, for such an $i$ we have that  $w_{d_{(i)}}w_{d_{(3)}}<0$.
			\item If $a_1<0$ (that is, $\mathbf v=\mathbf w_x'$), there exists $i\in\{1,2\}$ such that $|w'_{d_{(i)}}|<|w'_{x_{(i)}}|$ and $|w'_{d_{(3)}}|<|w'_{x_{(3)}}|$. Moreover, for such an $i$ we have that $w'_{d_{(i)}}w'_{d_{(3)}}<0$. 
		\end{enumerate}
\end{corollary}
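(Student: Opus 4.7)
The strategy is to translate the integer inequalities from Proposition \ref{main} between the pairs $(x_1,x_2),(d_1,d_2)$ (respectively $(x'_1,x'_2),(d'_1,d'_2)$) into coordinatewise inequalities for $\mathbf{w}_x,\mathbf{w}_d$ (respectively $\mathbf{w}_x',\mathbf{w}_d'$), exploiting the prescribed sign patterns $\mathbf{v}_1=(+,-,0)$ and $\mathbf{v}_2=(+,+,-)$ together with Table \ref{signs-vx}.

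First, by the uniqueness in Proposition \ref{unique}, the hypothesis $a_1>0$ in case (i) forces $(a_1,a_2)=x^{(\delta_1,\delta_2)}$, and $a_1<0$ in case (ii) forces $(a_1,a_2)=x'^{(\delta_1,\delta_2)}$; so indeed $\mathbf{v}=\mathbf{w}_x$ or $\mathbf{w}_x'$ according to the sign of $a_1$. Since $x\notin\mathrm{Euc}(\delta_1,\delta_2)$, Proposition \ref{main}(1a)/(2a) produces a basement $d$ satisfying $0<d_1<x_1$ and $x_2<d_2<0$ in case (i), while Proposition \ref{main}(1b)/(2b) produces one with $x'_1\le d'_1\le 0$ and $0<d'_2<x'_2$ in case (ii).

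The main step is the coordinate expansion. Consider case (i) with $\sigma=1$: then $w_{x(3)}=x_2 v_{2(3)}$ and $w_{x(2)}=x_1 v_{1(2)}+x_2 v_{2(2)}$, and the signs $v_{1(2)}<0$, $v_{2(2)}>0$, $v_{2(3)}<0$ make the two summands in the second coordinate both nonpositive (indeed, $x_1 v_{1(2)}<0$ alone) and the third coordinate strictly positive. Replacing $(x_1,x_2)$ by $(d_1,d_2)$ strictly shrinks each in absolute value while preserving signs, so $|w_{d(2)}|<|w_{x(2)}|$, $|w_{d(3)}|<|w_{x(3)}|$, and $w_{d(2)}w_{d(3)}<0$; hence $i=2$ works. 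For case (i) with $\sigma=-1$, writing $\mathbf{w}_x=x_1\mathbf{v}_1-x_2\mathbf{v}_2$, the first coordinate $x_1 v_{1(1)}-x_2 v_{2(1)}$ is a sum of nonnegative terms (strictly positive because $x_1 v_{1(1)}>0$) and the third coordinate $-x_2 v_{2(3)}$ is nonpositive; the same monotonicity argument then yields $i=1$. Case (ii) is handled analogously with the primed vectors and the inequalities $x'_1\le d'_1\le 0$, $0<d'_2<x'_2$.

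The main delicacy is the degenerate subcase of Proposition \ref{main}(1b) where $d'_1=0$ (i.e., $d=\eta_2$): there the inequality $x'_1\le d'_1$ is not strict, but $\mathbf{v}_1$ contributes nothing to $\mathbf{w}_d'$, so the two relevant coordinates of $\mathbf{w}_d'$ reduce to pure multiples of $v_{2(2)}$ and $v_{2(3)}$, and the strict inequality $d'_2<x'_2$ (guaranteed because $x\neq\eta_2$) still delivers the required strict shrinkage in both coordinates. All other inequalities supplied by Proposition \ref{main} are strict, and the sign condition $w_{d(i)}w_{d(3)}<0$ (respectively $w'_{d(i)}w'_{d(3)}<0$) follows automatically from Table \ref{signs-vx}, whose sign pattern for $\mathbf{w}_d$ and $\mathbf{w}_d'$ is identical to that for $\mathbf{w}_x$ and $\mathbf{w}_x'$.
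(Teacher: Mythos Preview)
Your proof is correct and follows essentially the same approach as the paper: identify $(a_1,a_2)$ with $x^{(\delta_1,\delta_2)}$ or $x'^{(\delta_1,\delta_2)}$ via Proposition~\ref{unique}, invoke Proposition~\ref{main} for the basement inequalities, and then expand coordinates using the sign patterns of $\mathbf v_1,\mathbf v_2$ to pick $i=2$ when $\sigma=1$ and $i=1$ when $\sigma=-1$. Your extra remark on the degenerate case $d'_1=0$ is a harmless elaboration (the paper simply writes $a_1<d'_1\le 0$ directly, since $a_1<0$ rules out $x'_1=d'_1=0$), and the parenthetical ``guaranteed because $x\neq\eta_2$'' is slightly misplaced, as $0<d'_2<x'_2$ is already strict in Proposition~\ref{main}(1b) unconditionally.
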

\begin{proof}

Notice that $x=a_1\delta_1+a_2\delta_2$. According to Proposition \ref{unique}, if $a_1>0$, then $(a_1,a_2)=x^{(\delta_1,\delta_2)}$;  otherwise $(a_1,a_2)=x'^{(\delta_1,\delta_2)}$. Hence, in the first case $\mathbf v=\mathbf w_x$, while in the second case $\mathbf v=\mathbf w'_x$.

Suppose that $a_1>0$. Following Proposition \ref{main}, consider $d=d_1\delta_1+d_2\delta_2$ with $a_1>d_1> 0$ and $0>d_2>a_2$. 

\begin{itemize}
\item If $\sigma=1$, $w_{x_{(3)}}=a_1 v_{1_{(3)}}+a_2v_{2_{(3)}}=a_2v_{2_{(3)}}>d_2v_{2_{(3)}}=d_1v_{1_{(3)}}+d_2v_{2_{(3)}}=w_{d_{(3)}}>0$. We take $i=2$, obtaining $w_{x_{(2)}}=a_1 v_{1_{(2)}}+a_2v_{2_{(2)}}<d_1v_{1_{(2)}}+d_2v_{2_{(2)}}<0$.

\item If $\sigma=-1$, $w_{x_{(3)}}=a_1 v_{1_{(3)}}-a_2v_{2_{(3)}}=-a_2v_{2_{(3)}}<-d_2v_{2_{(3)}}=d_1v_{1_{(3)}}-d_2v_{2_{(3)}}=w_{d_{(3)}}<0$. Also  $w_{x_{(1)}}=a_1 v_{1_{(1)}}-a_2v_{2_{(1)}}>d_1v_{1_{(1)}}-d_2v_{2_{(1)}}=w_{d_{(1)}}>0$.
\end{itemize}

Now assume that $a_1<0$. Following Proposition \ref{main} consider $d=d'_1\delta_1+d'_2\delta_2$ with $a_1<d'_1\le 0$ and $0<d'_2<a_2$.

\begin{itemize}
\item If $\sigma=1$, $0>w'_{d_{(3)}}= d_1'v_{1_{(3)}}+d_2'v_{2_{(3)}}=d_2'v_{2_{(3)}}>a_1v_{1_{(3)}}+a_2v_{2_{(3)}}=w'_{x_{(3)}}$. Now take $i=2$. Then $0<w'_{d_{(2)}}= d_1'v_{1_{(2)}}+d_2'v_{2_{(2)}}<a_1v_{1_{(2)}}+a_2v_{2_{(2)}}=w'_{x_{(2)}}$. 

\item If $\sigma=-1$, $0<w'_{d_{(3)}}= d_1'v_{1_{(3)}}-d_2'v_{2_{(3)}}=-d_2'v_{2_{(3)}}<-a_2v_{2_{(3)}}=a_1v_{1_{(3)}}-a_2v_{2_{(3)}}=w'_{x_{(3)}}$. Here we choose $i=1$, obtaining $0>w'_{d_{(1)}}=d_1'v_{1_{(1)}}-d_2'v_{2_{(1)}}>a_1v_{1_{(1)}}-a_2v_{2_{(1)}}=w'_{x_{(1)}}$.\qedhere
\end{itemize}

\end{proof}

This corollary ensures that for every $\mathbf w_x$ or $\mathbf w'_x$ there exists a  $\mathbf w_d$ or $\mathbf w'_d$, respectively, with $d<x$ and such that the two known coordinates of  $\mathbf w_x$ or $\mathbf w_x'$, according to Table \ref{signs-vx}, are greater in absolute value than those of $\mathbf w_d$ or $\mathbf w_d'$, respectively.

Now we are going to show how can uniqueness in Proposition \ref{unique} extend to the level of the vectors $\mathbf w_x$ and $\mathbf w'_x$. We want to associate to each $\mathbf v \in M_S$  such that $\ell(\mathbf v)\in\{0,g,2g,\ldots, \max\{\delta_1,\delta_2\}\}$ two vectors $\mathbf w_x$, $\mathbf w'_x$ such that $\ell(\mathbf w_x)=\ell(\mathbf w'_x)=\ell(\mathbf v)$.

Next we see that if a vector has length equal to zero, then it is a multiple of the vector appearing in Proposition \ref{signos-v-deltas}. Hence, when this vector is added or subtracted to another vector, the length remains the same.

\begin{lemma}
With the notation defined in Section \ref{sec:uno} and Definition \ref{Euclidset}, let $\mathbf v\in M_S$ with $\ell(\mathbf v)=0$. Then 
\[
\mathbf v=\alpha(\delta_2/g\mathbf v_1-\sigma\delta_1/g\mathbf v_2),
\]
for some $\alpha\in \mathbb Z$. 
\end{lemma}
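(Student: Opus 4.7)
The plan is to use that $\{\mathbf v_1,\mathbf v_2\}$ is an integer basis of $M_S$ (as chosen in Section \ref{sec:uno}) and that $\ell$ is linear, so the condition $\ell(\mathbf v)=0$ translates into a linear Diophantine equation which is easy to solve thanks to the definition of $g$.

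First, since $\{\mathbf v_1,\mathbf v_2\}$ generates $M_S$ as a free abelian group of rank two, write uniquely $\mathbf v = a\mathbf v_1 + b\mathbf v_2$ with $a,b\in\mathbb Z$. By definition, $\ell(\mathbf v_1)=\delta_1$ and $\ell(\mathbf v_2)=\sigma\delta_2$, so linearity of $\ell$ gives
\[
0 = \ell(\mathbf v) = a\delta_1 + b\sigma\delta_2.
\]
Dividing through by $g=\gcd(\delta_1,\delta_2)$, this becomes $a(\delta_1/g) = -b\sigma(\delta_2/g)$, with $\gcd(\delta_1/g,\delta_2/g)=1$.

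From the coprimality, $\delta_2/g$ must divide $a$, so $a = \alpha\,\delta_2/g$ for some $\alpha\in\mathbb Z$. Substituting back and cancelling $\delta_2/g$ yields $b = -\alpha\sigma\,\delta_1/g$. Therefore
\[
\mathbf v = \alpha\tfrac{\delta_2}{g}\mathbf v_1 - \alpha\sigma\tfrac{\delta_1}{g}\mathbf v_2 = \alpha\!\left(\tfrac{\delta_2}{g}\mathbf v_1 - \sigma\tfrac{\delta_1}{g}\mathbf v_2\right),
\]
as required. There is no serious obstacle here; the only thing worth checking is the sign convention for $\sigma$ so that $\ell(\mathbf v_2)=\sigma\delta_2$ is used correctly, which follows directly from the definition of $\sigma$ given right after the choice of $\mathbf v_2$.
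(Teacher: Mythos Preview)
Your proof is correct and follows essentially the same approach as the paper: write $\mathbf v$ in the chosen basis of $M_S$, use linearity of $\ell$ to get a Diophantine relation, and solve it via coprimality of $\delta_1/g$ and $\delta_2/g$. The only cosmetic difference is that the paper expands in the basis $\{\mathbf v_1,\sigma\mathbf v_2\}$ (so that both lengths are $\delta_1,\delta_2$ without a sign), whereas you expand in $\{\mathbf v_1,\mathbf v_2\}$ and carry the factor $\sigma$ through; the arguments are otherwise identical.
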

\begin{proof}
Since $\{\mathbf v_1, \sigma\mathbf v_2\}$ is a basis for $M_S$, we have that there exists $\lambda_1,\lambda_2\in \mathbb Z$ such that $\mathbf v=\lambda_1\mathbf v_1+\lambda_2\sigma\mathbf v_2$. Now $0=\ell(\mathbf v)=\lambda_1\delta_1+\lambda_2\delta_2$. Since $\gcd(\delta_1,\delta_2)=g$, we deduce that $\delta_1/g$ divides $\lambda_2$, and $\delta_2/g$ divides $\lambda_1$. So there exists $k_1$ and $k_2$ integers such that $\lambda_1=k_1\delta_2/g$ and $\lambda_2=k_2\delta_1/g$. We then have $0=k_1+k_2$ and consequently $k_1=-k_2$. Take $\alpha=k_1/g$. Hence $\mathbf v=\alpha\delta_2 \mathbf v_1 - \alpha \delta_1 \sigma\mathbf v_2$. 
\end{proof}

We now characterize the vectors in $M_S$ with length in $\{1,\ldots, \max\{\delta_1,\delta_2\}\}$.

\begin{proposition}\label{normalizacion} 
Under the hypotheses and notations of Section \ref{sec:uno} and Definition \ref{Euclidset}, let $\mathbf v\in M_S$ with $0< \ell(\mathbf v) \le \max\{\delta_1,\delta_2\}$. Then
\[
\mathbf v=a_1\mathbf v_1+\sigma a_2\mathbf v_2+\alpha(\delta_2/g\mathbf v_1-\sigma\delta_1/g\mathbf v_2),
\]
for some $a_1,a_2\in \mathbb Z^2\setminus\{(0,0)\}$ such that $a_1a_2\le 0$, $-\delta_2/g<a_1\le 
\delta_2/g$, $-\delta_1/g<a_2\le \delta_1/g$, where $g=\gcd(\delta_1,\delta_2)$ and $\alpha\in\mathbb Z$.

Furthermore, $a_1$, $a_2$ and $\alpha$ are unique if we impose that the third coordinates of $\mathbf u=a_1\mathbf v_1+\sigma a_2\mathbf v_2$ and $\mathbf v$ have the same sign.
\end{proposition}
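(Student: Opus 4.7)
The plan is to combine the basis structure of $M_S$ with Proposition \ref{unique} and the preceding lemma. Since $\{\mathbf v_1, \sigma \mathbf v_2\}$ is a basis of $M_S$, I will first write $\mathbf v = \lambda_1 \mathbf v_1 + \lambda_2 \sigma \mathbf v_2$ for unique $\lambda_1,\lambda_2 \in \mathbb Z$. Applying the linear map $\ell$ yields $x := \ell(\mathbf v) = \lambda_1 \delta_1 + \lambda_2 \delta_2$, and by hypothesis $x \in \{g, 2g, \ldots, \max\{\delta_1,\delta_2\}\}$ (it is automatically a multiple of $g = \gcd(\delta_1,\delta_2)$, being a $\mathbb Z$-linear combination of $\delta_1$ and $\delta_2$).

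For existence, invoke Proposition \ref{unique} to obtain one of the two normalized representations $x^{(\delta_1,\delta_2)} = (x_1,x_2)$ or $x'^{(\delta_1,\delta_2)} = (x_1',x_2')$. Taking either one as $(a_1, a_2)$ furnishes integers with $a_1 a_2 \le 0$, satisfying the required bounds, and with $a_1 \delta_1 + a_2 \delta_2 = x$. Subtracting from the length equation yields $(\lambda_1 - a_1)\delta_1 = -(\lambda_2 - a_2)\delta_2$; the same argument used in the preceding lemma (via $\gcd(\delta_1,\delta_2) = g$) produces $\alpha \in \mathbb Z$ such that $\lambda_1 - a_1 = \alpha\delta_2/g$ and $\lambda_2 - a_2 = -\alpha \delta_1/g$. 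Substituting back into the basis expression of $\mathbf v$ gives the desired decomposition.

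For uniqueness under the sign condition, observe that the third coordinate of $\mathbf u = a_1 \mathbf v_1 + \sigma a_2 \mathbf v_2$ equals $\sigma a_2 v_{2_{(3)}}$, and since $v_{2_{(3)}} < 0$ its sign is $-\mathrm{sgn}(\sigma a_2)$. The two Proposition \ref{unique} choices enforce $a_2 \le 0$ and $a_2' > 0$ respectively, so by Table \ref{signs-vx} the vectors $\mathbf w_x$ and $\mathbf w'_x$ have third coordinates of strictly opposite signs whenever $a_2 \neq 0$. Imposing that the sign of $\mathbf u_{(3)}$ match that of $\mathbf v_{(3)}$ therefore pins down exactly one of the two representations, so $(a_1, a_2)$ and consequently $\alpha$ are unique. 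The main obstacle is the degenerate case $\mathbf v_{(3)} = 0$, which is equivalent to $\lambda_2 = 0$ and forces $x = \lambda_1 \delta_1$; here one must check that the hypothesis $x \le \max\{\delta_1,\delta_2\}$, together with $g \le \delta_1$, forces $0 < \lambda_1 \le \delta_2/g$, so that the first representation of Proposition \ref{unique} genuinely produces $a_2 = 0$ (and hence $\mathbf u_{(3)} = 0$), ensuring that the sign-matching convention—with zero matching zero—still selects a unique representative.
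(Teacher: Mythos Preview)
Your proof is correct, and the existence argument takes a genuinely shorter route than the paper's. You read off $(a_1,a_2)$ directly from Proposition~\ref{unique} applied to $x=\ell(\mathbf v)$ and then solve for $\alpha$ via the divisibility argument of the preceding lemma. The paper instead stays with the original coefficients $(\lambda_1,\lambda_2)$: it first proves $\lambda_1\lambda_2\le 0$ from $0<\lambda_1\delta_1+\lambda_2\delta_2\le\max\{\delta_1,\delta_2\}$ by a case split on which of $\delta_1,\delta_2$ is larger, and then establishes four range implications (e.g.\ $\lambda_1>\delta_2/g\Rightarrow\lambda_2\le-\delta_1/g$) before shifting by the appropriate $\alpha$. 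Your approach is more economical precisely because Proposition~\ref{unique} already encapsulates this range analysis; the paper's approach, by contrast, is more self-contained since Proposition~\ref{unique} is stated there without proof. The uniqueness argument---two candidates, distinguished by the sign of the third coordinate of $\mathbf u$, which is governed by the sign of $a_2$---is essentially the same in both treatments, including the somewhat informal handling of the boundary case $a_2=0$.
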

\begin{proof} 
We use once more that $\{\mathbf v_1,\sigma\mathbf v_2\}$ is a generating system for $M_S$; then there exist unique $\lambda_1,\lambda_2\in \mathbb Z$ such that $\mathbf v=\lambda_1 \mathbf v_1 +\lambda_2\sigma\mathbf v_2$. 

Let us prove that $\lambda_1\lambda_2\le 0$. Clearly, if $\lambda_1\lambda_2=0$, we are done. 

Assume that $\lambda_1>0$ (the other case is analogous). 
\begin{itemize}
 \item If $\delta_1>\delta_2$, we have $0<\ell(\mathbf v)=\lambda_1\delta_1+\lambda_2\delta_2\le \max\{\delta_1,\delta_2\}=\delta_1$. Then $\lambda_2\delta_2\le (1-\lambda_1)\delta_1\le 0$, which implies  $\lambda_2\le 0$.
 \item If $\delta_2>\delta_1$, we have $0<\ell(\mathbf v)=\lambda_1\delta_1+\lambda_2\delta_2\le \max\{\delta_1,\delta_2\}=\delta_2$. Then $0<\lambda_1\delta_1\le (1-\lambda_2)\delta_2$ and $0<1-\lambda_2$, yielding $\lambda_2\le 0$.
\end{itemize} 

Next, we prove the following assertions. 

\begin{table}[h]
\begin{center}
\begin{tabular}{lcl}
$\lambda_1>\delta_2/g$ & implies & $\lambda_2\le -\delta_1/g$, \\
$\lambda_1\le -\delta_2/g$ & implies & $\lambda_2>\delta_1/g$, \\
$-\delta_2/g<\lambda_1\le 0$ & implies & $0<\lambda_2\le \delta_1/g$, \\
$0<\lambda_1\le \delta_2/g$ & implies & $-\delta_1/g<\lambda_2\le 0$. \\
\end{tabular}
\end{center}
\end{table}

\begin{enumerate}[(1)]
\item Assume that $\lambda_1>\delta_2/g$.
 \begin{itemize}
 \item If $\delta_1>\delta_2$, we have that  $0<\ell(\mathbf v)=\lambda_1\delta_1+\lambda_2\delta_2\le \delta_1$.  Hence $\lambda_2\delta_2\le (1-\lambda_1)\delta_1$. From $\lambda_1>\delta_2/g$, we deduce  $(1-\lambda_1)\le-\delta_2/g$.  Therefore, $\lambda_2\delta_2\le (1-\lambda_1)\delta_1\le-\delta_1\delta_2/g$, and thus $\lambda_2\le -\delta_1/g$.
 \item If $\delta_2>\delta_1$, we have $0<\ell(\mathbf v)=\lambda_1\delta_1+\lambda_2\delta_2\le \delta_2$, and then $\lambda_1\delta_1\le (1-\lambda_2)\delta_2$. As $\delta_2/g<\lambda_1$, we get  $\delta_2\delta_1/g<\lambda_1\delta_1$. This implies  $\delta_1/g<(1-\lambda_2)$ or equivalently $\lambda_2\le -\delta_1/g$.
 \end{itemize}
\item Assume now that $\lambda_1\le -\delta_2/g$. Then  $0<\ell(\mathbf v)=\lambda_1\delta_1+\lambda_2\delta_2\le -\delta_2\delta_1/g+\lambda_2\delta_2=(\lambda_2-\delta_1/g)\delta_2$. This forces $\lambda_2>\delta_1/g$.
\item Suppose that $-\delta_2/g<\lambda_1\le 0$.
 \begin{itemize}
 \item If $\delta_1>\delta_2$, then $0<\ell(\mathbf v)=\lambda_1\delta_1+\lambda_2\delta_2\le \delta_1$. As $(1-\lambda_1)\le\delta_2/g$, we deduce that  $\lambda_2\delta_2\le (1-\lambda_1)\delta_1\le\delta_1\delta_2/g$, and consequently $\lambda_2\le \delta_1/g$.
 \item If $\delta_2>\delta_1$, we have $\ell(\mathbf v)=\lambda_1\delta_1+\lambda_2\delta_2\le \delta_2$. Since $-\delta_2/g<\lambda_1$, we obtain  $-\delta_2\delta_1/g+\lambda_2\delta_2<\lambda_1\delta_1+\lambda_2\delta_2\le \delta_2$. Hence  $\lambda_2-\delta_1/g<1$, or equivalently, $\lambda_2\le \delta_1/g$.
 \end{itemize}
\item Finally, assume that $0<\lambda_1\le \delta_2/g$. Then $0<\ell(\mathbf v)=\lambda_1\delta_1+\lambda_2\delta_2\le \delta_2\delta_1/g+\lambda_2\delta_2=(\lambda_2+\delta_1/g)\delta_2$. Hence $-\delta_1/g<\lambda_2$.
\end{enumerate}
In order to conclude the proof it suffices to observe the following. 
\begin{itemize}
	\item If $\lambda_1$ fulfills the conditions (3) or (4), we are done.
  	\item If $\lambda_1>\delta_2/g$, take $\alpha\in \mathbb Z$ such that $0<\lambda_1-\alpha\delta_2/g\le \delta_2/g$. Set $a_1=\lambda_1-\alpha\delta_2/g\le \delta_2/g$ and $a_2=\lambda_2+\alpha\delta_1/g$. Define $\mathbf u=a_1\mathbf v_1+\sigma a_2\mathbf v_2$. Then $\ell(\mathbf v)=\ell(\mathbf u)$, and $\mathbf v=\mathbf u+\alpha(\delta_2/g\mathbf v_1-\sigma\delta_1/g\mathbf v_2)$. Now $\mathbf u$ fulfills (4), and we are done. Notice that if we take $\alpha+1$ instead of $\alpha$, we fall in case (3), with $\mathbf u'=\mathbf u-(\delta_2/g\mathbf v_1-\sigma\delta_1/g\mathbf v_2)$: thus it can be deduced that there are two possible choices of $\alpha$ fulfilling the conditions of the statement.
  	\item If $\lambda_1<-\delta_2/g$, we proceed analogously.
\end{itemize}
Since there are two possible choices of $\alpha$, but $\lambda_1$ and $\lambda_2$ are unique, there exist two possible choices for $(a_1,a_2)$. In order to conclude the proof, it suffices to show that the signs of the third coordinates of $\mathbf u$ and $\mathbf u'$ are different (and thus one of them must be equal to the sign of the third coordinate of $\mathbf v$).  Assume that $\alpha$ is such that $0<a_1\le \delta_2/g$, and thus by (4) applied to $\mathbf u$, $-\delta_1/g<\sigma a_2\le 0$. 

If we write $\mathbf u'=a_1'\mathbf v_1+\sigma a_2'\mathbf v_2$ then, as $a_1'=a_1-\delta_2/g>0$, by (3) we have $\sigma a_2'>0$. Since $a_2$ and $a_2'$ have different signs, we are done.
\end{proof}

This proposition will be used later to work with $\mathbf u$ instead of $\mathbf v$. 


The goal of this work is to prove the following theorem. For clarity's sake, we first give an example; then, we provide the intermediate results needed for its proof, which will be a direct consequence of Lemmas \ref{noesta} and \ref{siesta}.

\begin{theorem}\label{unionDs}
Let $S$ be a symmetric numerical semigroup with embedding dimension three. With the notation introduced in Section \ref{sec:uno} and Definition \ref{Euclidset}, we have 
\[
\Delta(S)=\mathrm{Euc}(\delta_1,\delta_2)\setminus\{0\}.
\]
\end{theorem}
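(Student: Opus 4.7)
The plan is to prove the two inclusions separately, in the form of the Lemmas \ref{noesta} (``$\subseteq$'') and \ref{siesta} (``$\supseteq$'') announced right before the statement. Both directions hinge on the correspondence $x \leftrightarrow (\mathbf w_x, \mathbf w'_x)$ set up in \eqref{w-x}, the sign pattern recorded in Table \ref{signs-vx}, and the descent produced by Corollary \ref{elmenor}.

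For $\Delta(S) \subseteq \mathrm{Euc}(\delta_1,\delta_2)\setminus\{0\}$, I would pick $d\in\Delta(S)$ together with factorizations $\mathbf z_1,\mathbf z_2\in\mathsf Z(s)$ realizing two consecutive lengths $\ell_1<\ell_2=\ell_1+d$ of some $\mathsf L(s)$. Then $\mathbf v=\mathbf z_2-\mathbf z_1\in M_S$ has $\ell(\mathbf v)=d>0$, with $d\le\max\{\delta_1,\delta_2\}$ by Proposition \ref{max-delta} and $g\mid d$ by Proposition \ref{min-delta}. Using Proposition \ref{normalizacion}, I can replace $\mathbf v$ by an equivalent $\mathbf u=a_1\mathbf v_1+\sigma a_2\mathbf v_2$ of the same length whose third coordinate has the same sign as that of $\mathbf v$, so that $\mathbf u$ equals either $\mathbf w_d$ or $\mathbf w'_d$. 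Assuming for contradiction that $d\notin\mathrm{Euc}(\delta_1,\delta_2)$, Corollary \ref{elmenor} provides a basement $d^*<d$ and a companion vector $\mathbf w_{d^*}$ (or $\mathbf w'_{d^*}$) with two coordinates strictly smaller in absolute value, having the correct signs given by Table \ref{signs-vx}. I would then add the companion vector to $\mathbf z_1$ (the choice between $\mathbf z_1$ and $\mathbf z_2$, and the choice of sign, being dictated by $\sigma$) to obtain a vector in $\mathbb N^3$ which is a factorization of $s$ whose length lies strictly between $\ell_1$ and $\ell_2$, contradicting adjacency.

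For the reverse inclusion, I would show that every $\eta\in\mathrm{Euc}(\delta_1,\delta_2)\setminus\{0\}$ belongs to $\Delta(S)$ by explicitly exhibiting two factorizations of some $s\in S$ whose lengths are consecutive and differ by $\eta$. The natural choice is a translate by a sufficiently positive factorization so that $\mathbf z_1$ and $\mathbf z_2=\mathbf z_1+\mathbf w_\eta$ (or $\mathbf w'_\eta$) both lie in $\mathbb N^3$. To prove consecutiveness, any alternative factorization would yield a vector $\mathbf v\in M_S$ with $0<\ell(\mathbf v)<\eta$, and after normalization via Proposition \ref{normalizacion} this vector would be some $\mathbf w_{\eta'}$ or $\mathbf w'_{\eta'}$ with $g\mid\eta'$. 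Using Corollary \ref{elmenor} at the appropriate level of the Euclid sequence, together with the uniqueness in Proposition \ref{unique}, one can bound the coordinates of such a hypothetical vector and rule out that it fits inside the chosen $\mathbf z_1, \mathbf z_2$.

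The main obstacle is the bookkeeping in the first inclusion: to turn the inequality in Corollary \ref{elmenor} into an actual factorization move, I need the modified integer triple to be coordinate-wise non-negative. Only two coordinates are controlled by the size inequality of Corollary \ref{elmenor}; the remaining one (whose sign is marked ``$?$'' in Table \ref{signs-vx}) must be forced non-negative by a careful choice of which factorization to modify and by splitting into the cases $\sigma=1$ and $\sigma=-1$. The decisive ingredient here is Proposition \ref{normalizacion}, which lets me align the sign of the third coordinate of $\mathbf u$ with that of $\mathbf z_2-\mathbf z_1$; once this third coordinate is matched, the other two are automatically dominated by the basement vector, and the contradiction closes.
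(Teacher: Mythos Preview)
Your overall strategy matches the paper's: the theorem is indeed a consequence of Lemmas \ref{noesta} and \ref{siesta}, and you have correctly identified the normalization (Proposition \ref{normalizacion}) and the basement descent (Corollary \ref{elmenor}) as the key tools. However, two points need correction.

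For the inclusion $\supseteq$ (Lemma \ref{siesta}), a ``sufficiently positive'' translate is exactly what you must \emph{not} take. If you translate by a large vector, the resulting element $s$ acquires many additional factorizations, and there is no obstruction to intermediate lengths appearing. The paper instead takes the \emph{minimal} choice $\mathbf z=\mathbf v^+$, $\mathbf z'=\mathbf v^-$ (so $s=|v_{(i)}|n_i$ for the unique coordinate $i$ whose sign is opposite to the other two, and $\mathbf z'$ has a single nonzero entry). Consecutiveness is then proved by showing that any $\mathbf u=a_1\mathbf v_1+\sigma a_2\mathbf v_2$ with $0<\ell(\mathbf u)<d$ satisfies $|a_1|\ge|x_1|$ and $|a_2|>|x_2|$ (via Remark \ref{nota} if $\ell(\mathbf u)\in\mathrm{Euc}(\delta_1,\delta_2)$, or via the basement otherwise), forcing $\mathbf z'+\mathbf u$ to have a negative coordinate. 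This last step only works because two coordinates of $\mathbf z'$ are zero.

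For the inclusion $\subseteq$ (Lemma \ref{noesta}), the case split governing whether one modifies $\mathbf z_1$ or $\mathbf z_2$ is on the sign of the ``$?$'' coordinate of the basement vector, not on $\sigma$; the value of $\sigma$ only determines \emph{which} coordinate is the unknown one. More substantively, Corollary \ref{elmenor} compares the basement vector to $\mathbf u$, not to $\mathbf z_2-\mathbf z_1=\mathbf u+\alpha(\delta_2/g\,\mathbf v_1-\sigma\delta_1/g\,\mathbf v_2)$. To conclude that $\mathbf z_2-\mathbf w_d$ (or $\mathbf z_1+\mathbf w_d$) lies in $\mathbb N^3$, the paper first reduces to disjoint supports $\mathbf z_1\cdot\mathbf z_2=0$ and then uses Proposition \ref{signos-v-deltas} to verify that the $\alpha$-contribution has the same sign as $\mathbf u$ in the two controlled coordinates. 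Your sentence ``the other two are automatically dominated'' skips precisely this step.
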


\begin{example}\label{mainexample}
Let $S=\langle s_2s_3,s_1s_3,s_1s_2\rangle=\langle 2015, 7124, 84940\rangle$, with $s_1=548$, $s_2=155$, and $s_3=13$. Then $\mathbf v_1=(548,-155,0)$ and $\mathbf v_2=(0,155,-13)$. Hence
$\delta_1=393$ and $\delta_2=142$. Extended Euclid's Algorithm for $\delta_1=393$ and $\delta_2=142$ yields the Table \ref{tabla-ejemplo}, whose cells contain the vectors $\mathbf w_d$ or $\mathbf w_d'$, as defined in \eqref{w-x}, for $d\in \mathrm{Euc}(\delta_1,\delta_2)$.

\begin{table}[h]\label{tabla-ejemplo}
{\footnotesize
\begin{tabular}{l|ccc}
 \hline
  $\mathbf w_d = (?,-,+)$ & $\begin{aligned} \mathbf w_{393} & =\mathbf v_1 \\ &=(548, -155, 0) \end{aligned}$ & $\begin{aligned}\mathbf w_{251}& =\mathbf v_1-\mathbf v_2\\ &= (548, -310, 13)\end{aligned}$ & $\begin{aligned} \mathbf w_{109} &=\mathbf v_1-2\mathbf v_2\\&= (548, -465, 26)\end{aligned}$ \\ 
  $\mathrm D(\delta_1,\delta_2)$ & $\delta_1=393$ & $\delta_1-\delta_2=251$ & $\delta_1-2\delta_2=109$  \\

 \hline
   $\mathbf w_d'=(?,+,-)$ & $\begin{aligned} \mathbf w_{142}' & = \mathbf v_2 \\ & = (0, 155, -13)\end{aligned}$ & $\begin{aligned} \mathbf w_{33}' & = -\mathbf v_1+3\mathbf v_2 \\ &= (-548, 620, -39) \end{aligned}$  & \\ 
  $\mathrm D(\delta_2, \delta_3)$ &$\delta_2=142$ & $\delta_2-\delta_3=33$  & \\ 
  
 \hline
  $\mathbf w_d = (?,-,+)$ & $\begin{aligned} \mathbf w_{76} & =\mathbf 2v_1-5\mathbf v_2 \\ &=(1096, -1085, 65) \end{aligned}$ & $\begin{aligned}\mathbf w_{43}& =\mathbf 3\mathbf v_1-8\mathbf v_2\\ &= (1644, -1705, 104)\end{aligned}$ & $\begin{aligned} \mathbf w_{10} &=4\mathbf v_1-11\mathbf v_2\\&= (2192, -2325, 143)\end{aligned}$\\ 
  $\mathrm D(\delta_3,\delta_4)$ & $\delta_3-\delta_4=76$ & $\delta_3-2\delta_4=43$ & $\delta_3-3\delta_4=10$  \\

 \hline

   $\mathbf w_d'=(?,+,-)$ &$\begin{aligned} \mathbf w_{23}' &=-5\mathbf v_1+14\mathbf v_2 \\ &= ( -2740, 2945, -182 )\end{aligned}$ & $\begin{aligned} \mathbf w_{13}'& =-9\mathbf v_1+25\mathbf v_2 \\&= ( -4932, 5270, -325)\end{aligned}$ & $\begin{aligned} \mathbf w_{3}' & = -13\mathbf v_1+36\mathbf v_2 \\& (-7124, 7595, -468)\end{aligned}$ \\
 $\mathrm D(\delta_4,\delta_5)$  &  $\delta_4-\delta_5=23$ & $\delta_4-2\delta_5=13$ & $\delta_4-3\delta_5=3$ \\
 
\hline
  $\mathbf w_d = (?,-,+)$ & $\begin{aligned} \mathbf w_{7} & =17\mathbf v_1-47\mathbf v_2\\ &=(9316, -9920, 611) \end{aligned}$ & $\begin{aligned}\mathbf w_{4}& =30\mathbf v_1-83\mathbf v_2\\ &= (16440, -17515, 1079)\end{aligned}$ & $\begin{aligned} \mathbf w_{1} &=43\mathbf v_1-119\mathbf v_2\\&= (23564, -25110, 1547)\end{aligned}$ \\ 
  $\mathrm D(\delta_5,\delta_6)$ & $\delta_5-\delta_6=7$ & $\delta_5-2\delta_6=4$ & $\delta_5-3\delta_6=1$ \\
  
\hline
 $\mathbf w_d'=(?,+,-)$ & $\begin{aligned} \mathbf w_{2} & =-56\mathbf v_1+155\mathbf v_2 \\ &=(-30688, 32705, -2015) \end{aligned}$ &  $\begin{aligned} \mathbf w_1 & =-99\mathbf v_1+274\mathbf v_2 \\ &=(-54252, 57815, -3562)\end{aligned}$ & $\begin{aligned} \mathbf w_0 & =-142\mathbf v_1+393\mathbf v_2\\  &=( -77816, 82925, -5109) \end{aligned}$  \\ 
 $\mathrm D(\delta_6,\delta_7)$ & $\delta_6-\delta_7=2$ & $\delta_6-2\delta_7=1$ & $\delta_6-3\delta_7=0$ \\ 
 
\hline
\end{tabular}
}\caption{Table for $\langle 2015, 7124, 84940\rangle$}
\end{table}

According to Theorem \ref{unionDs},  we would obtain
\[
\Delta(S)=\{1,2,3,4,7,10,13,23,33,43,76,109,142,251,393\}.
\]
\end{example}

\begin{lemma}
Let $\mathbf z,\mathbf z'\in\mathsf Z(s)$, with $0\neq s\in S$. With the notation introduced in Section \ref{sec:uno}, if $0<\ell(\mathbf z)-\ell(\mathbf z')<\max\{\delta_1,\delta_2\} $, then
 \begin{enumerate}
 \item $\mathbf z-\mathbf z'$ is either of the form $(?,+,-)$ or $(?,-,+)$ when $\ell(\mathbf v_2)>0$ ($\sigma=1$), and
 \item $\mathbf z-\mathbf z'$ is of the form $(+,?,-)$ or $(-,?,+)$ when $\ell(\mathbf v_2)<0$ ($\sigma=-1$).
 \end{enumerate}
 \end{lemma}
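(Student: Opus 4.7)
The plan is to write $\mathbf v := \mathbf z - \mathbf z'$ in the basis $\{\mathbf v_1, \mathbf v_2\}$ of $M_S$ as $\mathbf v = \lambda_1 \mathbf v_1 + \lambda_2 \mathbf v_2$, and then squeeze the signs of $\lambda_1, \lambda_2$ out of the two-sided length constraint $0 < \ell(\mathbf v) < \max\{\delta_1, \delta_2\}$, where $\ell(\mathbf v) = \lambda_1 \delta_1 + \lambda_2 \sigma \delta_2$. Write $\mathbf v_1 = (p_1, -q_1, 0)$ and $\mathbf v_2 = (p_2, q_2, -r_2)$ with $p_1, q_1, q_2, r_2 > 0$ and $p_2 \ge 0$ ($p_2 = 0$ precisely in the single Betti case). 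The third coordinate of $\mathbf v$ is then $-\lambda_2 r_2$, so it detects the sign of $\lambda_2$ outright; the second coordinate is $-\lambda_1 q_1 + \lambda_2 q_2$ and the first is $\lambda_1 p_1 + \lambda_2 p_2$. Notice that no deeper machinery is invoked — Proposition \ref{normalizacion} and the $\mathbf w_x$/$\mathbf w'_x$ formalism are not needed here.

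For $\sigma = 1$ I argue by contradiction. Suppose $v_{(2)}$ and $v_{(3)}$ were both strictly positive. Then $v_{(3)} > 0$ forces $\lambda_2 \le -1$, hence $\lambda_2 q_2 < 0$, and then $v_{(2)} > 0$ forces $-\lambda_1 q_1 > -\lambda_2 q_2 > 0$, so $\lambda_1 \le -1$. This gives $\ell(\mathbf v) = \lambda_1 \delta_1 + \lambda_2 \delta_2 \le -\delta_1 - \delta_2 < 0$, contradicting $\ell(\mathbf v) > 0$. Symmetrically, if both were strictly negative then $\lambda_1, \lambda_2 \ge 1$, giving $\ell(\mathbf v) \ge \delta_1 + \delta_2 > \max\{\delta_1, \delta_2\}$, contradicting the upper bound. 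Hence $v_{(2)} v_{(3)} \le 0$, which is exactly the claimed pattern $(?, +, -)$ or $(?, -, +)$ under the loose sign convention used throughout the paper (see e.g.\ Table \ref{signs-vx}, where $\mathbf w_{\delta_1} = \mathbf v_1$ is recorded as $(?, -, +)$ despite its vanishing third coordinate).

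For $\sigma = -1$ the same dichotomy runs with $v_{(1)}$ in place of $v_{(2)}$: $v_{(3)}$ again pins down $\lambda_2$, while $v_{(1)} = \lambda_1 p_1 + \lambda_2 p_2$ pins down $\lambda_1$ because $p_1 > 0$ and $p_2 \ge 0$ make the two summands cooperate in sign. Using $\ell(\mathbf v) = \lambda_1 \delta_1 - \lambda_2 \delta_2$, the two sub-cases $v_{(1)}, v_{(3)} > 0$ and $v_{(1)}, v_{(3)} < 0$ yield $\ell(\mathbf v) \ge \delta_1 + \delta_2 > \max\{\delta_1, \delta_2\}$ and $\ell(\mathbf v) \le -\delta_1 - \delta_2 < 0$ respectively. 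The only point that asks for a moment of care — essentially the main obstacle — is the case $p_2 = 0$: the inequalities $\lambda_1 p_1 > -\lambda_2 p_2 \ge 0$ (respectively $\lambda_1 p_1 < -\lambda_2 p_2 \le 0$) still force $\lambda_1 > 0$ (resp.\ $\lambda_1 < 0$), so no case split on $\#\mathrm{Betti}(S)$ is needed and the lemma falls out in a single uniform argument.
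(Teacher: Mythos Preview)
Your argument is correct and cleaner than the paper's. The paper invokes Proposition~\ref{normalizacion} to write $\mathbf z-\mathbf z'=a_1\mathbf v_1+\sigma a_2\mathbf v_2+\alpha(\delta_2/g\,\mathbf v_1-\sigma\delta_1/g\,\mathbf v_2)$ with $a_1a_2\le 0$ and bounded $a_i$, and then reads off the sign pattern from Table~\ref{signs-vx} according to the sign of $\alpha$ (and of $a_1,a_2$ when $\alpha=0$). You bypass that machinery entirely: writing $\mathbf v=\lambda_1\mathbf v_1+\lambda_2\mathbf v_2$ and arguing by contradiction directly from $0<\lambda_1\delta_1+\sigma\lambda_2\delta_2<\max\{\delta_1,\delta_2\}$ is both shorter and more transparent, and your explicit handling of the degenerate $p_2=0$ case is a nice touch. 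The trade-off is only contextual: the paper's route rehearses exactly the decomposition that is reused in the proof of Lemma~\ref{noesta}, so for them it is essentially free, whereas your proof stands alone.

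One small inaccuracy worth flagging (it does not affect the argument): the parenthetical ``$p_2=0$ precisely in the single Betti case'' is not quite right. In the two-Betti case $\mathbf v_2=(b+\lambda m_2,\,c-\lambda m_1,\,-a)$, and the first coordinate can vanish when $m_2\mid b$ and the minimizing $\lambda$ happens to be $-b/m_2$. Since you only use $p_2\ge 0$, nothing breaks.
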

\begin{proof} 
Recall that  $\mathbf z,\mathbf z'\in\mathbb N^3$. Since $\mathbf z-\mathbf z'\in M_S$ and $0<\ell(\mathbf z-\mathbf z')\le \max\{\delta_1,\delta_2\}$ by Proposition \ref{normalizacion} we know that $\mathbf z-\mathbf z'=a_1\mathbf v_1+\sigma a_2\mathbf v_2+\alpha(\delta_2/g\mathbf v_1-\sigma\delta_1/g\mathbf v_2)$. If $\alpha>0$, we have that $\mathbf z-\mathbf z'=(\alpha\delta_2/g+a_1)\mathbf v_1+\sigma(a_2-\alpha\delta_1/g)\mathbf v_2$, and clearly $(\alpha\delta_2/g+a_1)>0$ and $(a_2-\alpha\delta_1)/g<0$. According to Table \ref{signs-vx}, $\mathbf z-\mathbf z'=(?,-,+)$ if $\sigma=1$, or $\mathbf z-\mathbf z'=(+,?,-)$ if $\sigma=-1$. A similar argument shows that if $\alpha<0$, then $\mathbf z-\mathbf z'=(?,+,-)$ for $\sigma=1$, and $\mathbf z-\mathbf z'=(-,?,+)$ for $\sigma=-1$.

For the case $\alpha=0$, as $a_1a_2<0$ there are two possibilities.
 \begin{itemize}
 \item If $a_1<0$ and $a_2>0$, then $\mathbf z-\mathbf z'=(?,+,-)$ for $\sigma=1$, and $\mathbf z-\mathbf z'=(-,?,+)$ for $\sigma=-1$.
 \item If $a_1>0$ and $a_2<0$, then $\mathbf z-\mathbf z'=(?,-,+)$ for $\sigma=1$, and $\mathbf z-\mathbf z'=(+,?,-)$ for $\sigma=-1$.\qedhere
 \end{itemize}
\end{proof}

\begin{lemma}\label{noesta}
Let $S=\langle n_1,n_2,n_3\rangle$ be a symmetric numerical semigroup with embedding dimension three. With the notation introduced in Section \ref{sec:uno} and Definition \ref{Euclidset}, let $\mathbf z,\mathbf z'\in\mathsf Z(s)$, for some $s\in S\setminus\{0\}$ such that $x=\ell(\mathbf z)-\ell(\mathbf z')\notin \mathrm {Euc}(\delta_1,\delta_2)$. Then there exists $\mathbf z'' \in \mathsf Z(s)$ such that $\ell(\mathbf z')<\ell(\mathbf z'')<\ell(\mathbf z)$.
\end{lemma}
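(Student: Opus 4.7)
The plan is to construct $\mathbf z''$ as one of $\mathbf z-\mathbf w_d$ or $\mathbf z'+\mathbf w_d$, where $\mathbf w_d$ is the ``basement'' vector supplied by Corollary~\ref{elmenor}; the choice between the two will be dictated by the sign of a single ambiguous coordinate. First, I would apply Proposition~\ref{normalizacion} to decompose $\mathbf z-\mathbf z'=\mathbf u+\alpha\tau$ with $\tau=(\delta_2/g)\mathbf v_1-\sigma(\delta_1/g)\mathbf v_2$ and $\mathbf u\in\{\mathbf w_x,\mathbf w'_x\}$ chosen so that the sign of $\mathbf u_{(3)}$ matches that of $(\mathbf z-\mathbf z')_{(3)}$. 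By the preceding lemma, the four resulting cases (for each combination of $\sigma=\pm 1$ and $\mathbf u=\mathbf w_x$ versus $\mathbf w'_x$) are symmetric; for concreteness I treat $\sigma=1$, $\mathbf u=\mathbf w_x$, $\alpha\ge 0$, in which case $\mathbf z-\mathbf z'=(?,-,+)$.

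Since $x\notin\mathrm{Euc}(\delta_1,\delta_2)$, Proposition~\ref{main} (after a preliminary reduction to the range $0<x\le\max\{\delta_1,\delta_2\}$ if necessary, achieved by stepping $\mathbf z$ down or $\mathbf z'$ up by copies of $\mathbf v_1$ or $\mathbf v_2$) produces a basement $d=\mathrm{bsm}(x)\in\mathrm{Euc}(\delta_1,\delta_2)$ with $d<x$, and Corollary~\ref{elmenor} supplies a vector $\mathbf w_d$ satisfying $w_{x_{(2)}}<w_{d_{(2)}}<0$ and $0<w_{d_{(3)}}<w_{x_{(3)}}$. I consider the two candidates $\mathbf z_-:=\mathbf z-\mathbf w_d$ and $\mathbf z'_+:=\mathbf z'+\mathbf w_d$; since $\mathbf w_d\in M_S$, both give factorizations of $s$, and their lengths $\ell(\mathbf z)-d$ and $\ell(\mathbf z')+d$ lie strictly between $\ell(\mathbf z')$ and $\ell(\mathbf z)$ because $0<d<x$. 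A direct computation, using the identities $z_3-z'_3=w_{x_{(3)}}+\alpha\tau_{(3)}\ge w_{x_{(3)}}>w_{d_{(3)}}$ and $z'_2-z_2=|w_{x_{(2)}}|+\alpha|\tau_{(2)}|\ge|w_{x_{(2)}}|>|w_{d_{(2)}}|$, then shows that coordinates $2$ and $3$ of both candidates are already non-negative.

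The main obstacle is coordinate $1$, the ``$?$'' coordinate of $\mathbf z-\mathbf z'$: neither its sign nor that of $w_{d_{(1)}}$ is forced by the setup. The resolution is to choose the candidate according to the sign of $w_{d_{(1)}}$. If $w_{d_{(1)}}\le 0$, then $(\mathbf z-\mathbf w_d)_{(1)}=z_1+|w_{d_{(1)}}|\ge 0$, so set $\mathbf z'':=\mathbf z_-$; if $w_{d_{(1)}}\ge 0$, then $(\mathbf z'+\mathbf w_d)_{(1)}=z'_1+w_{d_{(1)}}\ge 0$, so set $\mathbf z'':=\mathbf z'_+$. In either case $\mathbf z''\in\mathbb N^3\cap\mathsf Z(s)$ with $\ell(\mathbf z')<\ell(\mathbf z'')<\ell(\mathbf z)$. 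The remaining three symmetric cases are handled by the same argument, using $\mathbf w'_d$ in place of $\mathbf w_d$ and interchanging the roles of coordinates as dictated by Table~\ref{signs-vx}.
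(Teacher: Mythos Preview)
Your argument is essentially the paper's own: decompose $\mathbf z-\mathbf z'$ via Proposition~\ref{normalizacion}, take the basement $d=\mathrm{bsm}(x)$ from Proposition~\ref{main}/Corollary~\ref{elmenor}, and then pick whichever of $\mathbf z-\mathbf w_d$ or $\mathbf z'+\mathbf w_d$ has its one ambiguous coordinate non-negative. Your verification that the two ``controlled'' coordinates are automatically non-negative (using $z_3-z'_3=w_{x_{(3)}}+\alpha\tau_{(3)}\ge w_{x_{(3)}}>w_{d_{(3)}}$ and the analogous inequality for coordinate~$2$) is exactly the content of the paper's case analysis, only organized a bit more cleanly; in fact you avoid the paper's preliminary reduction to $\mathbf z\cdot\mathbf z'=0$, which turns out to be unnecessary.

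The one point that needs repair is the parenthetical ``preliminary reduction to the range $0<x\le\max\{\delta_1,\delta_2\}$ \dots\ by stepping $\mathbf z$ down or $\mathbf z'$ up by copies of $\mathbf v_1$ or $\mathbf v_2$.'' There is no guarantee that $\mathbf z-\mathbf v_i$ or $\mathbf z'+\mathbf v_i$ lands in $\mathbb N^3$, so this step is not justified as written. The clean fix is simply to invoke Proposition~\ref{max-delta}: since $\max\Delta(S)=\max\{\delta_1,\delta_2\}$, any $x>\max\{\delta_1,\delta_2\}$ cannot be a gap between consecutive lengths, and an intermediate $\mathbf z''$ must already exist. (The paper glosses over this case too, applying Proposition~\ref{normalizacion} without checking its hypothesis $\ell(\mathbf v)\le\max\{\delta_1,\delta_2\}$.) One further cosmetic remark: your assertion ``$\alpha\ge 0$'' in the case $\sigma=1$, $\mathbf u=\mathbf w_x$ is not an extra assumption but a consequence of the sign-matching choice of $\mathbf u$ in Proposition~\ref{normalizacion}; it would be worth saying so.
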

\begin{proof}
If $z_{(i)}\cdot z'_{(i)}\neq 0$, we consider $s'=s-|z_{(i)}-z'_{(i)}|n_i$, which is an element in $S$ with two factorizations $\mathbf z$ and $\mathbf z'$ with $\ell(\mathbf z)-\ell(\mathbf z')=x$ and so that one of them has the $i^{th}$ coordinate equal to zero. With this argument, we can assume  $\mathbf z\cdot \mathbf z'=0$. Later, by adding $\sum_{i=1}^3|z_{(i)}-z'_{(i)}|\mathbf e_i$ ($\mathbf e_i$ is the $i^{th}$ row of the $3\times 3$ identity matrix) to the factorizations obtained, we will recover the original factorizations.

Now, from Proposition \ref{normalizacion}, we can write $\mathbf z-\mathbf z'=a_1\mathbf v_1+\sigma a_2\mathbf v_2+\alpha(\delta_2/g\mathbf v_1-\sigma\delta_1/g\mathbf v_2)$ and  $\mathbf u=a_1\mathbf v_1+\sigma a_2\mathbf v_2$ in such a way that  $\mathbf z-\mathbf z'=(z_{(1)}-z'_{(1)},z_{(2)}-z'_{(2)},z_{(3)}-z'_{(3)})$, $\mathbf u=(u_{(1)},u_{(2)},u_{(3)})$ and $\alpha(\delta_2/g\mathbf v_1-\sigma\delta_1/g\mathbf v_2)=(\alpha_{(1)},\alpha_{(2)},\alpha_{(3)})$. So, we have for $j\in\{1,2,3\}$, $z_{(j)}-z'_{(j)}=u_{(j)}+\alpha_{(j)}$. As the sign of the third coordinates of $\mathbf z-\mathbf z'$ and $\mathbf u$ is the same and the third coordinate of $\mathbf u$ is smaller in absolute value, we necessarily deduce that $\alpha_{(3)}$ has the same sign too. Looking on Table \ref{signs-vx} and Proposition \ref{signos-v-deltas} we have  $\mathrm{sgn}(z_{(i)}-z'_{(i)})=\mathrm{sgn}(u_{(i)})=\mathrm{sgn}(\alpha_{(i)})$ and  $\mathrm{sgn}(z_{(j)}-z'_{(j)})=\mathrm{sgn}(u_{(j)})=\mathrm{sgn}(\alpha_{(j)})$, for some $i\in\{1,2\}$  and $j=3$,.

In light of Proposition \ref{unique}, if $-\delta_2/g<a_1\le 0<a_2\le \delta_1/g$, then $-\delta_1/g<a_2-\delta_1/g\le 0 <a_1+\delta_2/g\le \delta_2/g$. According to the different cases in Corollary \ref{elmenor}, we obtain the following.
\begin{enumerate} 
  \item If $a_1>0$, then $\mathbf v=(v_{(1)},v_{(2)},v_{(3)})=\mathbf w_d$ for $d<\ell(\mathbf z-\mathbf z')=\ell(\mathbf u)$.
  \item If $a_1<0$, then $\mathbf v=(v_{(1)},v_{(2)},v_{(3)})=\mathbf w'_d$ for $d<\ell(\mathbf z-\mathbf z')=\ell(\mathbf u)$.     
\end{enumerate}     

Now we have that $0<\ell(\mathbf v)=d<\ell(\mathbf z)-\ell(\mathbf z')$. By adding $\ell(\mathbf z')$ we obtain $\ell(\mathbf z')<\ell(\mathbf v)+\ell(\mathbf z')<\ell(\mathbf z)$, and it can be easily deduced that $0<\ell(\mathbf z')<\ell(\mathbf z)-\ell(\mathbf v)<\ell(\mathbf z)$.
 
If all the coordinates of $\mathbf v+\mathbf z'$ are positive, or all the coordinates of $\mathbf z-\mathbf v$ are positive, then we would obtain a factorization of $s$ with length comprised between $\ell(\mathbf z')$ and $\ell(\mathbf z)$: thus we only have to prove that either $\mathbf z-\mathbf v$ or $\mathbf v+\mathbf z'$ has positive coordinates. 
 
By Proposition \ref{main} and the behavior of the coordinates described in Corollary \ref{elmenor}, we can deduce that two of the three coordinates (the second and third if $\ell(\mathbf v_2)>0$, and the first and third if $\ell(\mathbf v_2)<0$) of $\mathbf v$ are smaller in absolute value than the corresponding coordinates of $\mathbf u$. Thus we have two possible cases, according to the coordinate whose sign we do not control (the first if $\ell(\mathbf v_2)>0$, and the second if $\ell(\mathbf v_2)<0$).
 
Assume first that this coordinate is negative. In this setting, the corresponding coordinate of  $\mathbf z-\mathbf v$ is positive, while for the other two we have:
  \begin{itemize}
  \item if $v_{(j)}<0$, for some $j$, as $z_{(j)}\ge 0$ we have that the $j^{th}$ coordinate of $\mathbf z-\mathbf v$ will be positive, and
  \item if  $v_{(j)}>0$, we know that $u_{(j)}> v_{(j)}>0$ and, as $\mathrm{sgn}(z_{(j)}-z'_{(j)})=\mathrm{sgn}(u_{(j)})=\mathrm{sgn}(\alpha_{(j)})$ and $z_{(j)}-z'_{(j)}-\alpha_{(j)}=u_{(j)}>v_{(j)}$ we have that $z_{(j)}-v_{(j)}>0$, and thus the $j^{th}$ coordinate $\mathbf z-\mathbf v$ would be again positive.
  \end{itemize}  
 
If the above unknown coordinate is positive, then we take $\mathbf v+\mathbf z'$ instead of $\mathbf z-\mathbf v$, and make the same coordinate positive. For the other two coordinates, if $v_{(j)}>0$, we do nothing, and if $v_{(j)}<0$, as $0<-v_{(j)}<-u_{(j)}=-z_{(j)}+z'_{(j)}+\alpha_{(j)}\le z'_{(j)}$, remember that in this case $\alpha_{(j)}$ is negative. As $z'_{(j)}$ is the $j$-th coordinate of $\mathbf z'$ we would again have that $z'_{(j)}+v_{(j)}$ is a positive coordinate, and by adding $\ell(\mathbf z')$ we obtain that $\ell(\mathbf z')<\ell(\mathbf v+\mathbf z')<\ell(\mathbf z)$. 
\end{proof}

\begin{example}
In the setting of Examples \ref{mainexample-pre} and \ref{mainexample-pre2}, take $11630n_2\in S$ and consider $\mathbf z=(10960,5,715)$ and $\mathbf z'=(0,11603,0)$. We have $\ell(\mathbf z)=11680$ and $\ell(\mathbf z')=11630$, so $\ell(\mathbf z -\mathbf z')=50<\max\{\delta_1,\delta_2\}$. As $z_{(2)}\cdot z'_{(2)}\neq 0$, we can consider $s'=11630n_2-5n_2$ as the $s'$ in the proof of Lemma \ref{noesta}. So, we are going to work with $s'=11625n_2$, $\mathbf z=(10960,0,715)$ and $\mathbf z'=(0,11625,0)$, obtaining $\mathbf z-\mathbf z'=(10960,-11625,715)=20\mathbf v_1-5\mathbf v_2$. In this case $\alpha=0,$ whence $\mathbf u=20\mathbf v_1-5\mathbf v_2$. As we showed in Example \ref{mainexample-pre2}, we can consider $\mathbf w_d=\mathbf w_{43}=3\mathbf v_1-8\mathbf v_2=(1644,-1705,104)$. Now, if we consider $\mathbf w_d+\mathbf z'=(1644,9920,104)$, we obtain a new factorization of $11625n_2$ with $11630=\ell(\mathbf z')<\ell(\mathbf w_d+\mathbf z')=11668<\ell(\mathbf z)=11680$, while if we consider $\mathbf z-\mathbf w_d=(9316,1705,611)$, a new factorization of $11625n_2$ can be obtained, with $\ell(\mathbf z-\mathbf w_d)=11632$ which, again, is between $\ell(\mathbf z')$ and $\ell(\mathbf z)$. In this case, both factorizations can be chosen. Finally adding $5$ to the second coordinate, we obtain factorizations of the original element, $11630n_2$.

Now if we choose $71300n_2\in S$, we have two factorizations $\mathbf z=(66856,0,4394)$ and $\mathbf z'=(0,71300,0)$ whose lengths are $\ell(\mathbf z)=71250$ and $\ell(\mathbf z')=71300$, obtaining again that $\ell(\mathbf z')-\ell(\mathbf z)=50$. In this case $\mathbf u=-122\mathbf v_1+338\mathbf v_2$ because $\mathbf z'-\mathbf z$ is of the type $(?,+,-)$ so $a_1<0$ and $a_2>0$. According to Example \ref{mainexample-pre2}, we can pick $\mathbf w'_{33}=-\mathbf v_1+3\mathbf v_2=(-548,620,-39)$ to obtain $\mathbf w'_d+\mathbf z=(66308,620,4355)$ and $\mathbf z'-\mathbf w'_d=(548,70680,39)$; hence we obtained two factorizations of $71300n_2$ whose lengths $\ell(\mathbf w'_d+\mathbf z)=71283$ and $\ell(\mathbf z'-\mathbf w'_d)=71267$ are both between $\ell(\mathbf z)$ and $\ell(\mathbf z')$.
\end{example}

\begin{lemma}\label{siesta}
Let $S=\langle n_1,n_2,n_3\rangle$ be a symmetric numerical semigroup with embedding dimension three. With the notation introduced in Section \ref{sec:uno} and Definition \ref{Euclidset}, consider $d\in \mathrm{Euc}(\delta_1,\delta_2)$. Then there exists $s\in S$ and $\mathbf z,\mathbf z'\in\mathsf Z(s)$, with $d=\ell(\mathbf z)-\ell(\mathbf z')$, and such that for any other $\mathbf z''\in\mathsf Z(s)$ either $\ell(\mathbf z'')\le \ell(\mathbf z')$ or $\ell(\mathbf z'')\ge\ell(\mathbf z)$.
\end{lemma}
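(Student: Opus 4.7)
The strategy is to realize $d$ as a consecutive length gap in some $\mathsf{L}(s)$, using the vector $\mathbf{v}$ that equals $\mathbf{w}_d$ or $\mathbf{w}'_d$ (from equation \eqref{w-x}) with $\ell(\mathbf{v})=d$. I decompose $\mathbf{v}=\mathbf{z}-\mathbf{z}'$ by taking coordinatewise positive and negated negative parts, so $\mathbf{z},\mathbf{z}'\in\mathbb{N}^3$ have disjoint supports. Setting $s=z_1n_1+z_2n_2+z_3n_3\in S$, both $\mathbf{z}$ and $\mathbf{z}'$ lie in $\mathsf{Z}(s)$ with $\ell(\mathbf{z})-\ell(\mathbf{z}')=d$. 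From Table \ref{signs-vx}, the supports of $\mathbf{z}$ and $\mathbf{z}'$ are very restricted: when $\sigma=1$, $\mathbf{z}'$ is concentrated in coordinate $2$ (together possibly with coordinate $1$), and similarly for $\sigma=-1$.

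For the main claim, I argue by contradiction: suppose $\mathbf{z}''\in\mathsf{Z}(s)$ satisfies $\ell(\mathbf{z}')<\ell(\mathbf{z}'')<\ell(\mathbf{z})$. Put $\mathbf{u}'=\mathbf{z}''-\mathbf{z}'\in M_S$; then $0<c:=\ell(\mathbf{u}')<d$ and $\mathbf{u}'\ge -\mathbf{z}'$ coordinatewise. Apply Proposition \ref{normalizacion} to write $\mathbf{u}'=\mathbf{u}+\alpha(\delta_2/g\,\mathbf{v}_1-\sigma\delta_1/g\,\mathbf{v}_2)$ with $\mathbf{u}=a_1\mathbf{v}_1+\sigma a_2\mathbf{v}_2$ satisfying the bounds and the $a_1a_2\le 0$ condition. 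The sign of the third coordinate of $\mathbf{u}'$ is forced by $\mathbf{z}''\ge 0$ combined with the third coordinate of $\mathbf{z}'$, so the decomposition is uniquely determined. In particular, $\mathbf{u}$ is either $\mathbf{w}_c$ or $\mathbf{w}'_c$.

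To reach the contradiction I compare the coordinates of $\mathbf{u}'$ with those of $\mathbf{v}$. Since $c$ lies in the open interval $(0,d)$, Proposition \ref{main} (in particular the discussion of basements) controls the relative sizes of the coordinates of $\mathbf{w}_c,\mathbf{w}'_c$ against those of $\mathbf{v}$; combined with Remark \ref{nota} this shows that in the ``controlled'' coordinate positions of Corollary \ref{elmenor}, the absolute value on the side that would need to be cancelled by $\mathbf{z}'$ actually exceeds the available slack in $\mathbf{z}'$. This violates $\mathbf{u}'\ge -\mathbf{z}'$ and gives the contradiction.

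The main obstacle is ruling out that the $\alpha$-perturbation $\alpha(\delta_2/g\,\mathbf{v}_1-\sigma\delta_1/g\,\mathbf{v}_2)$ can rescue the above comparison: for $\alpha\ne 0$ some coordinate of $\mathbf{u}'$ moves in the wrong direction, and one must verify that in every case (two signs of $\sigma$, two choices between $\mathbf{w}_c$ and $\mathbf{w}'_c$, and $d$ lying either in $\mathrm{D}(\eta_1,\eta_2)\cup\mathrm{D}(\eta_2,\eta_3)$ or in a deeper block) the perturbation only strengthens the coordinate failure rather than weakening it. This is the mirror image of the case analysis in Lemma \ref{noesta}: instead of exhibiting an intermediate factorization from a non-Euclid length gap, one exhibits the obstruction to any intermediate factorization from a genuine Euclid length gap.
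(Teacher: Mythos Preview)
Your approach is essentially the paper's: the same $\mathbf v=\mathbf w_d$ or $\mathbf w'_d$, the same $\mathbf z,\mathbf z'$ obtained from its positive and negative parts, and the same contradiction showing that an intermediate $\mathbf z''$ would force a negative coordinate, via the key inequalities $|a_1|\ge|x_1|$ and $|a_2|>|x_2|$ drawn from Remark~\ref{nota} and the basement in Proposition~\ref{main}.

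The one difference is bookkeeping. You pass $\mathbf z''-\mathbf z'$ through Proposition~\ref{normalizacion}, obtain $\mathbf u+\alpha(\delta_2/g\,\mathbf v_1-\sigma\delta_1/g\,\mathbf v_2)$, and then treat the $\alpha$-shift as the ``main obstacle'' requiring its own case analysis. The paper instead writes $\mathbf z''-\mathbf z'=a_1\mathbf v_1+\sigma a_2\mathbf v_2$ with the \emph{unique} basis coefficients and never introduces $\alpha$ at all: if $(a_1,a_2)$ happens to lie outside the window of Proposition~\ref{unique}, then already $|a_1|>\delta_2/g\ge|x_1|$ and $|a_2|\ge\delta_1/g>|x_2|$, so the needed inequalities hold trivially and Remark~\ref{nota}/Proposition~\ref{main} are only invoked in the in-range case. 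Your detour through Proposition~\ref{normalizacion} is not wrong, but it manufactures the very obstacle you then have to dismantle. (A small imprecision: the sign of the third coordinate of $\mathbf u'$ is only forced when $z'_{(3)}=0$; in the subcase $\mathbf v=\mathbf w'_d$ with $\sigma=1$ one has $z'_{(3)}>0$, and there the argument needs the magnitude comparison $|u_{(3)}|>|v_{(3)}|$, not just a sign constraint.)
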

\begin{proof}
Let $d\in \mathrm{Euc}(\delta_1,\delta_2)$, and set $\mathbf v=x_1\mathbf v_1+\sigma x_2\mathbf v_2$, with $x_1x_2<0$, and $-\delta_1/g<x_2\le \delta_1/g$ and $-\delta_2/g<x_1\le \delta_2/g$, such that $\ell(\mathbf v)=d$. Then
\[
\mathbf v=(x_1v_{1_{(1)}}+\sigma x_2v_{2_{(1)}},x_1v_{1_{(2)}}+\sigma x_2v_{2_{(2)}},-\sigma x_2v_{2_{(3)}}), 
\]
with $\ell(\mathbf v)=d>0$. Clearly, $\mathbf v$ has two coordinates with the same sign, and the other with opposite sign. Let us denote the latter by  $i$. 

Set $s=|v_{x_{(i)}}|n_i$, and $\mathbf z=(v^+_{x_{(1)}},v^+_{x_{(2)}},v^+_{x_{(3)}})$ and $\mathbf z'=(v^-_{x_{(1)}},v^-_{x_{(2)}},v^-_{x_{(3)}})$, where $a^+=(|a|+a)/2\ge 0$ and $a^-=(|a|-a)/2\ge 0$. Then $\mathbf z$ and $\mathbf z'$ are factorizations of $s$. The rest of the factorizations of $s$ are of the form $\mathbf z''=\mathbf z'+a_1\mathbf v_1+a_2\sigma\mathbf v_2$, with $a_1a_2<0$. We are concerned with those such that $0<a_1\delta_1+a_2\delta_2<d$, in order to find one with length between $\ell(\mathbf z)$ and $\ell(\mathbf z')$. Denote $\mathbf u=a_1\mathbf v_1+a_2\sigma\mathbf v_2=(u_{(1)},u_{(2)},u_{(3)})$. 

Next, we prove that $|a_1|\ge |x_1|$ and $|a_2|>|x_2|$.

Assume that $\ell(\mathbf u)\in \mathrm{Euc}(\delta_1,\delta_2)$. Then, as $\ell(\mathbf u)<\ell(\mathbf v)$, by construction (see Remark \ref{nota}) $|a_1|\ge |x_1|$ and $|a_2|>|x_2|$. On the other hand, if $\ell(\mathbf u)\notin \mathrm{Euc}(\delta_1,\delta_2)$, then we consider $e=\mathrm {bsm}(\ell(\mathbf u))\in \mathrm {Euc}(\delta_1,\delta_2)$ associated to $\mathbf u$ (Proposition \ref{main}), and then we have $|a_1|>|e_1|>|x_1|$ and $|a_2|>|e_2|>|x_2|$, and the assertion is proved.
 
Thus, Table  \ref{signs-vu} collects the different possible settings.

\begin{table}[h]
\begin{center}
\begin{tabular}{c|cc}
                  & $\sigma=1$   & $\sigma=-1$ \\
                  & $\mathbf v_1=(+,-,0)$, $\mathbf v_2=(+,+,-)$  & $\mathbf v_1=(+,-,0)$, $\sigma\mathbf v_2=(-,-,+)$ \\
\hline
$x_1>0$, $x_2<0$  & $\mathbf v=(?,-,+)$ & $\mathbf v=(+,?,-)$ \\
$x'_1<0$, $x'_2>0$  & $\mathbf v=(?,+,-)$ & $\mathbf v=(-,?,+)$ \\
$a_1>0$, $a_2<0$  & $\mathbf u=(?,-,+)$ & $\mathbf u=(+,?,-)$ \\
$a'_1<0$, $a'_2>0$  & $\mathbf u=(?,+,-)$ & $\mathbf u=(-,?,+)$ \\
\end{tabular}
\end{center}
\caption{Signs for $\mathbf v$ and $\mathbf u$}
\label{signs-vu}
\end{table}

According to Table \ref{signs-vu}, we have the following cases.
\begin{enumerate}
	\item If $\sigma=1$, we obtain that $|u_{(2)}|>|v_{(2)}|$ and $|u_{(3)}|>|v_{(3)}|$
  \begin{enumerate}
  	\item If $\mathbf v=(?,-,+)$, then $z_{(2)}=0$, and $z'_{(2)}\le 0$. If in addition $u_{(2)}<0$, then  $z''_{(2)}<0$, which is a contradiction; and if  $u_{(2)}>0$ we derive $u_{(3)}<0$ and as $|u_3|=|a_2||v_{2_{(3)}}|>|x_2||v_{2_{(3)}}|=|v_{(3)}|$, we obtain $z''_{(3)}<0$, which is another contradiction.
  	\item If $\mathbf v=(?,+,-)$, then $z_{(3)}=0$. If $u_{(3)}<0$, then $z''_{(3)}<0$, a contradiction. If $u_{(3)}>0$, we have that $u_{(2)}<0$ and as $\mathrm{sgn}(a_1v_{1_{(2)}})=\mathrm{sgn}(a_2v_{2_{(2)}})$, we can assure $|u_{(2)}|=|a_1||v_{1_{(2)}}|+|a_2||v_{2_{(2)}}|>|x_1||v_{1_{(2)}}|+|x_2||v_{2_{(2)}}|=|v_{(2)}|$. So we get $z''_{(2)}<0$, which is again a contradiction.
  \end{enumerate}
    \item If $\sigma=-1$, we have that $|u_{(1)}|>|v_{(1)}|$ and $|u_{(3)}|>|v_{(3)}|$.
  \begin{enumerate}   
   \item If $\mathbf v=(+,?,-)$, then $z_{(3)}=0$. The case $u_{(3)}<0$ leads to $z''_{(3)}<0$, which is a contradiction; while from $u_{(3)}>0$ we deduce that $u_{(1)}<0$, and as $\mathrm{sgn}(a_1v_{1_{(1)}})=\mathrm{sgn}(\sigma a_2v_{2_{(1)}})$, we can assure $|u_{(1)}|=|a_1||v_{1_{(1)}}|+|a_2||\sigma v_{2_{(1)}}|>|x_1||v_{1_{(1)}}|+|x_2||\sigma v_{2_{(1)}}|=|v_{(1)}|$ we obtain $z''_{(1)}<0$, yielding once more a contradiction.
   \item If $\mathbf v=(-,?,+)$, then $z_{(1)}=0$. If the inequality $u_{(1)}<0$ holds, then $z''_{(1)}<0$, yielding a contradiction. If  $u_{(1)}>0$, then $u_{(3)}<0$, and as  $|u_{(3)}|=|a_2||\sigma v_{2_{(3)}}|>|x_2||\sigma v_{2_{(3)}}|=|v_{(3)}|$ we derive $z''_{(3)}<0$, which is also a contradiction.
  \end{enumerate}
\end{enumerate}

This proves that there is no factorization of $s$ with length between  $\ell(\mathbf z)$ and $\ell(\mathbf z')$.  
\end{proof}


\begin{thebibliography}{99}%

%

\bibitem{AG-S} A. Assi, P. A. Garc\'{\i}a-S\'anchez, Numerical semigroups and applications, RSME Springer series \textbf{1}, Springer, Switzerland, 2016.

\bibitem{MOP} T. Barron, C. O'Neill, R. Pelayo, \emph{On dynamic algorithms for factorization invariants in numerical monoids}, to appear in \textit{Math of Comp.} arXiv:1507.07435.

\bibitem{BCKR} C. Bowles, S. T. Chapman, N. Kaplan and D. Reiser, \textit{On Delta Sets of Numerical Monoids}, J. Algebra Appl.\textbf{5} (2006), 1--24.

\bibitem{k-deltas} S. Colton, N. Kaplan, \emph{The Realization Problem for Delta Sets of Numerical Semigroups}, to appear in \textit{J. Commut. Algebra}. arXiv:1503.08496.

\bibitem{CHK} S. T. Chapman, R. Hoyer and N. Kaplan, \emph{Delta Sets of Numerical Monoids are Eventually Periodic},
 \textit{Aequationes Math.} \textbf{77}(2009), 273--279.
 
%
%
\bibitem{CGLMS} S. T. Chapman, P. A. Garc\'ia-S\'anchez, D. Llena, A. Malyshev, D. Steinberg, \textit{On the Delta set and the Betti elements of a BF-monoid},  Arab. J. Math. \textbf{1} (2012), 53--61.
\bibitem{numericalsgps} M.~Delgado, P.A. Garc{\'i}a-S{\'a}nchez, and J.~Morais, \texttt{NumericalSgps}, A
package  for  numerical  semigroups, Version 1.0.1 dev (2015), (Refereed GAP
package), \url{http://www.gap-system.org/Packages/numericalsgps.html}.
%
\bibitem{GAP}
The GAP~Group, GAP -- Groups, Algorithms, and Programming, Version 4.7.5; 
2014, (\url{http://www.gap-system.org}).
%
\bibitem{GMV} J.I. Garc\'ia-Garc\'ia, M.A. Moreno-Fr\'ias, A Vigneron-Tenorio, \emph{Computation of Delta sets of numerical monoids},   \textit{Monatsh. Math.} \textbf{178-3} (2015), 457-472.
%

\bibitem{G-SLM} P.A. Garc\'ia-S\'anchez, D. Llena, A. Moscariello, \textit{Delta sets for numerical semigroups with embedding dimension three}. arXiv:1504.02116. To Appear in Forum Math.

\bibitem{G-SOW} P.A. Garc\'ia-S\'anchez, C. O'Neill, G. Webb, On the computation of factorization invariants for affine semigroups, arXiv:1504.02998.

\bibitem{single}  P. A. Garc\'ia-S\'anchez, I. Ojeda, J.C. Rosales, \textit{Affine semigroups having a unique Betti element}, J. Algebra Appl. \textbf{12} (2013), 1250177 (11 pages).

\bibitem{G} A. Geroldinger, \emph {On the arithmetic of certain not integrally closed noetherian integral domains},
\textit{Comm. Algebra} \textbf{19}(1991), 685--698.

%
%
%

\bibitem{RG} J. C. Rosales, P. A. Garc\'ia-S\'anchez, \textit
{Numerical Semigroups}, Developments in Mathematics 20, Springer New York (2009).
\end{thebibliography}
\end{document}